\newtheorem{cro}{Corollary}[section]
\newtheorem{defn}{Definition}[section]
\newtheorem{prop}{Proposition}[section]
\newtheorem{thm}{Theorem}[section]
\newtheorem{rem}{\bf Remark}[section]
\begin{document}
 \title{ Quantitative recurrence properties for systems with non-uniform structure
\footnotetext {
*Corresponding author\\
2010 Mathematics Subject Classification: 37D25, 37D35
  }}
\author{Cao Zhao$^{1},$ Ercai Chen$^{1,2_{*}}.$ \\
   \small   1 School of Mathematical Science, Nanjing Normal University,\\
    \small   Nanjing 210023, Jiangsu, P.R. China\\
     \small 2 Center of Nonlinear Science, Nanjing University,\\
         \small   Nanjing 210093, Jiangsu, P.R. China\\
          \small    e-mail: ecchen@njnu.edu.cn \\
           \small    e-mail: izhaocao@126.com \\}

\date{}
\maketitle

\begin{center}
 \begin{minipage}{120mm}
{\small {\bf Abstract.}   Let $X$ be a subshift satisfy non-uniform structure, and $\sigma:X\to X$ be a shift map. Further, define $$R(\psi) := \{x \in X : d(\sigma^{n}x, x)<\psi(n) ~\text{for infinitely many}~ n\}£¬ $$
and $$R(f)  := \{x \in X : d(\sigma^{n}x, x)<e^{ S_{n}f(x)}  ~\text{for infinitely many}~ n\}, $$
where $\psi : \mathbb N \to \mathbb R^{+}  $ is a  nonincreasing and positive function, and $f: X\to \mathbb R^{+}$ is a continuous positive function. In this paper, we give quantitative estimate of the above sets, that is, ${\dim}_{H}R(\psi)$ can be expressed by $\psi$ and ${\dim}_{H}R(f)$ is the solution of the $Bowen $ equation of topological pressure.  These results can be applied to a large class of symbolic systems, including $\beta$-shifts, $S$-gap shifts and their factors. }
\end{minipage}
 \end{center}
 \noindent
\textbf{Keywords:} Non-uniform structure, Recurrence, topology pressure, Hausdorff dimension, shrinking target.
\section{ Introduction.}
Let $(X,T, d)$ be a topological dynamical system, where $(X,d)$ is a compact metric space and $T:X\to X$ a continuous map. The set $M(X)$ denotes all Borel probability measures is a compact space for the weak$^{*}$ topology of measures, and $M(X,T)$ is the subset of $T$-invariant probability measures with the induced topology, and then for any $\mu\in M(X, T)$, define $(X,T, \mu, d)$ be a measure dynamical system. For the measure dynamical system $(X,T, d,\mu)$, Poincar\'{e} Recurrence Theorem states that given any  invariant measure, almost every point in any positive measure set $E$ returns $E$ an infinite number of times. These results are qualitative in nature, there are fruitful results about the descriptions of the recurrence in this way. We refer the reader to
\cite{Fur} and the references therein.
While, these results does not address either with which rate the orbit will return back to the initial point or in which manner the neighborhood of the initial point can shrink.
This, later, in \cite{Bos}, Boshernitzan presented the following result for general systems.
\begin{thm}{\rm \cite{Bos}}
Let $(X,T,\mu, d)$ be a measure dynamical system. Assume that, for some $\alpha>0$, the $\alpha$-dimensional Hausdorff measure $H^{\alpha}$ of the space $X$ is $\sigma$-finite. Then for $\mu$-almost all $x\in X,$
$$\liminf\limits_{n\to\infty}n^{\frac{1}{\alpha}}d(T^{n}x, x)<\infty.$$
If, moreover, $H^{\alpha}(X)=0, $ then for $\mu$-almost all $x\in X$,
$$\liminf\limits_{n\to\infty}n^{\frac{1}{\alpha}}d(T^{n}x, x)=0.$$
\end{thm}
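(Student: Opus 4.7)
The plan is to combine a Hausdorff-measure covering argument with Kac's lemma on first-return times, showing that at each length scale a typical point admits a return satisfying the quantitative bound, and then promoting this to infinitely many such returns by running the argument along a sequence of shrinking scales. I first reduce to $H^{\alpha}(X) < \infty$: under $\sigma$-finiteness, decompose $X = \bigsqcup_{k} X_{k}$ with $H^{\alpha}(X_{k}) < \infty$; since the conclusion is a pointwise almost-sure statement and $\mu$ is a probability measure, it suffices to prove it on each $X_{k}$.

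The core estimate is local in scale. Fix $\varepsilon, C > 0$ and $m \in \mathbb{N}$. By definition of Hausdorff measure, choose a cover $\{U_{i}\}$ of $X$ with $\operatorname{diam}(U_{i}) =: \delta_{i} \le 2^{-m}$ and $\sum_{i}\delta_{i}^{\alpha} < H^{\alpha}(X) + \varepsilon$, and disjointify to a partition $\{V_{i}\}$ with $V_{i} \subseteq U_{i}$. For each $i$, Poincar\'e recurrence makes the first-return time $\tau_{i}(x) := \inf\{n \ge 1 : T^{n}x \in U_{i}\}$ finite $\mu$-a.e.\ on $U_{i}$, and Kac's formula gives $\int_{U_{i}}\tau_{i}\,d\mu \le 1$. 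Markov's inequality at the threshold $L_{i} := C^{\alpha}\delta_{i}^{-\alpha}$ then yields $\mu(\{x \in V_{i} : \tau_{i}(x) > L_{i}\}) \le \delta_{i}^{\alpha}/C^{\alpha}$. Summing over $i$, the ``good'' set $G_{m} := \{x \in X : \tau_{i(x)}(x) \le L_{i(x)}\}$ (with $i(x)$ the unique index such that $x \in V_{i(x)}$) satisfies $\mu(G_{m}^{c}) \le (H^{\alpha}(X) + \varepsilon)/C^{\alpha}$, and for $x \in G_{m}$ the return time $n := \tau_{i(x)}(x)$ obeys $n^{1/\alpha} d(T^{n}x, x) \le L_{i(x)}^{1/\alpha}\, \delta_{i(x)} = C$.

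To promote one good return into infinitely many, apply the reverse Fatou lemma: $\mu(\limsup_{m} G_{m}) \ge \limsup_{m} \mu(G_{m}) \ge 1 - (H^{\alpha}(X)+\varepsilon)/C^{\alpha}$. If $x \in \limsup_{m} G_{m}$, the associated return times $n_{m}$ satisfy $d(T^{n_{m}}x, x) \le 2^{-m} \to 0$, which forces $n_{m} \to \infty$ unless $x$ is periodic---and periodic points trivially satisfy the conclusion, since $d(T^{kp}x, x) = 0$ whenever $T^{p}x = x$. Therefore $\liminf_{n} n^{1/\alpha} d(T^{n}x, x) \le C$ on a set of $\mu$-measure at least $1 - (H^{\alpha}(X)+\varepsilon)/C^{\alpha}$; sending $\varepsilon \to 0$ and then $C \to \infty$ gives the first conclusion. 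For the second, $H^{\alpha}(X) = 0$ allows the exceptional-measure bound to be made arbitrarily small for every fixed $C > 0$, so $\liminf = 0$ $\mu$-a.s. The principal obstacle is precisely this promotion step: a single Kac--Markov estimate certifies only one good return, and what ultimately delivers the liminf is the multi-scale reverse-Fatou argument together with the dichotomy that forces $n_{m} \to \infty$ for non-periodic $x$.
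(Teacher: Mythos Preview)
The paper does not prove this theorem at all: it is quoted from Boshernitzan~\cite{Bos} purely as background, with no argument supplied, so there is no ``paper's own proof'' to compare your proposal against.

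That said, your argument is essentially Boshernitzan's original proof and is correct. The ingredients---covering at scale $2^{-m}$ with $\sum_i\delta_i^\alpha$ close to $H^\alpha$, Kac's inequality $\int_{U_i}\tau_i\,d\mu\le 1$, Markov's inequality at the threshold $C^\alpha\delta_i^{-\alpha}$, and then the reverse-Fatou / limsup promotion along $m\to\infty$---are exactly the steps in~\cite{Bos}. A couple of minor points worth tightening: (i) in the $\sigma$-finite reduction you should make explicit that you cover only $X_k$ (not all of $X$) while still running the dynamics and Kac's inequality on the full space, so that the summed bad-set bound becomes $(H^\alpha(X_k)+\varepsilon)/C^\alpha$; the $X_k$ need not be $T$-invariant, but that is irrelevant since only the covering step, not the dynamics, is restricted to $X_k$; (ii) when $\mu(U_i)=0$ Kac is vacuous, but then the bad set in $V_i\subseteq U_i$ is $\mu$-null anyway, so the estimate still holds. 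The handling of periodic points via the dichotomy ``either $n_m\to\infty$ or some $T^nx=x$'' is clean and correct.
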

Furthermore, Barreira and Saussol study the shrinking rate of the local pointwise dimension.
\begin{thm}\label{BarSau}
Let $T:X\to X$ be a Borel measure transformation on a measurable set $X\subset \mathbb R^{m}$ for some $m\in \mathbb N$, and $\mu$ be a $T$-invariant probability measure on $X$. Then $\mu$-almost surely,
$$\lim\limits_{n\to\infty}n^{\frac{1}{\alpha}}d(T^{n}x, x)<\infty$$
for any $\alpha>\underline{d}_{\mu}(x),$ where $\underline{d}_{\mu}(x)$ is the lower pointwise dimension of $x$ with respect to $\mu$, given as
$$\underline{d}_{\mu}(x)=\liminf\limits_{r\to 0}\frac{\log \mu(B(x,r))}{\log r}.$$
\end{thm}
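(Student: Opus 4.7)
The plan is to derive the liminf bound from a quantitative form of Poincar\'e recurrence driven by the pointwise dimension hypothesis. Fix $\alpha > \underline{d}_\mu(x)$ and choose an intermediate $\beta \in (\underline{d}_\mu(x),\alpha)$. By the definition of the lower pointwise dimension, for $\mu$-a.e.\ $x$ one can extract a sequence of scales $r_k \downarrow 0$ along which $\mu(B(x,r_k)) \geq r_k^{\beta}$. The goal is to produce, for infinitely many $k$, a return time $n_k \leq r_k^{-\beta}$ with $T^{n_k}x \in B(x,r_k)$; this forces
\[
n_k^{1/\alpha}\,d(T^{n_k}x, x) \;\leq\; r_k^{-\beta/\alpha}\cdot r_k \;=\; r_k^{\,1-\beta/\alpha} \;\longrightarrow\; 0,
\]
so the liminf is in fact zero and in particular finite.

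First I would establish a non-ergodic Kac-type estimate: for any Borel set $B$ with $\mu(B)>0$, the first-return time $\tau_B$ satisfies $\int_B \tau_B\,d\mu \leq 1$, hence by Markov's inequality the set $\{y\in B: \tau_B(y) > 2/\mu(B)\}$ has measure at most $\mu(B)/2$. Applying this to the \textit{level set}
\[
X_{k,\beta} \;:=\; \{\,y\in X : \mu(B(y,r_k/2)) \geq (r_k/2)^{\beta}\,\}
\]
via a Besicovitch-type cover of $X_{k,\beta}$ by balls of radius $r_k/2$ (valid since $X \subset \mathbb{R}^m$), I would show that most points $y\in X_{k,\beta}$ return to $B(y,r_k)$ within $C\,r_k^{-\beta}$ steps, with $C$ depending only on the Besicovitch constant of $\mathbb{R}^m$. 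A Borel--Cantelli argument at dyadic scales $r_k=2^{-k}$, run simultaneously over a countable family of rational exponents $\beta_j\downarrow \underline{d}_\mu(x)$, then shows that $\mu$-a.e.\ $x$ lies in $X_{k,\beta_j}$ and realizes one of these fast returns for infinitely many $k$ at its own good scales.

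The main obstacle I anticipate is precisely this covering/diagonalization step. Because $\underline{d}_\mu(x)$ is only a liminf, the scales $r_k$ on which $\mu(B(x,r))\geq r^{\beta}$ depend on $x$, and the level sets $X_{k,\beta}$ need not have positive measure for every $(k,\beta)$. Extracting a \emph{uniform} return-time bound on these sets therefore requires a two-parameter decomposition over dyadic radii and rational exponents, together with the Besicovitch covering to convert local dimension data into a summable global estimate. A secondary technical point is the non-ergodicity of $\mu$, which prevents direct invocation of Kac's lemma; this is handled either by ergodic decomposition or by relying on the integrated bound $\int_B \tau_B\,d\mu \leq 1$ above, which requires no ergodicity.
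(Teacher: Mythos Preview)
The paper does not contain a proof of this theorem. It is quoted in the introduction, attributed to Barreira and Saussol (reference~[Bar]), purely as background motivation alongside Boshernitzan's result; no argument is supplied anywhere in the paper. There is therefore no ``paper's own proof'' against which to compare your proposal.

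For what it is worth, your outline is in the spirit of the original Barreira--Saussol argument: convert the lower-pointwise-dimension hypothesis into a bound $\mu(B(x,r))\ge r^{\beta}$ along a sequence of scales, and then use a Kac-type first-return estimate together with a Besicovitch cover (available precisely because $X\subset\mathbb{R}^{m}$) to manufacture a return to $B(x,r)$ within $O(r^{-\beta})$ iterates. The two technical issues you single out---non-ergodicity of $\mu$, and the $x$-dependence of the good scales forcing a dyadic/rational diagonalisation over level sets---are exactly the delicate points, and they are handled in the literature essentially along the lines you describe. One minor remark: the displayed formula in the paper has $\lim$ where $\liminf$ is clearly intended (compare the statement of Boshernitzan's theorem just above it); you are right to read the conclusion as a $\liminf$ bound.
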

Clearly, Boshernitzan showed almost all point have low recurrence rate. And Barreira and Saussol showed that the shrinking rate for recurrence may relate to some indicators of $x$. Another direction is that how large will the set of points be when the shrinking rate for recurrence is related to other funcitons?
In \cite{Hil, Hil1,Hil2}, Hill introduced a shrinking target problems from number theory and gave quantitative research of the recurrence.
Let $T:\mathbf J\to \mathbf J$ be an expanding rational map of the Riemann sphere acting on
its Julia set $\mathbf J$ and $f : \mathbf J \to  \mathbb R $ denote a H\"{o}lder continuous function satisfying $f(x)\geq
\log |T^{'}(x)|$ for all $x\in \mathbf J$. Then for any $z_{0}\in \mathbf J$, in \cite{Hil}, Hill and Velani studied the set
of "well approximable" points
$$D_{z_{0}}(f) := \{x\in \mathbf J  : d(y, x) < e^{-S_{n}f(y)}  ~\text{for infinitely many pairs }~ (y, n)\in \mathbf I\},$$
where $\mathbf I = \mathbf I(z_{0}) $ denotes the set of pairs $(y, n)(n\in \mathbb N)$ such that $T^{n}y=z_{0}$. In fact,
they gave the following result.
\begin{thm}
The set $D_{z_{0}}(f)$
  has Hausdorff dimension $s(f)$, where $s(f)$ is the unique
solution to the pressure equation
$$P(-s f )= 0.$$
\end{thm}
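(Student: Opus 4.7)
The plan is to bracket $\dim_H D_{z_0}(f)$ between $s(f)$ from both sides, using the thermodynamic formalism on Julia sets of expanding rational maps. The upper bound is a direct covering argument driven by the Ruelle-Perron-Frobenius characterization of the pressure $P(-sf)$. The lower bound is built from a Gibbs measure at the critical parameter together with a dynamical Borel-Cantelli lemma and the mass distribution principle.

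For the upper bound, I would observe that, for every $N\geq 1$, the family
$$\mathcal C_N=\Bigl\{B\bigl(y,e^{-S_n f(y)}\bigr):\ n\geq N,\ (y,n)\in \mathbf I(z_0)\Bigr\}$$
covers $D_{z_0}(f)$. Its $s$-dimensional volume equals $\sum_{n\geq N}\sum_{T^n y=z_0}e^{-sS_n f(y)}$, and by the transfer-operator description of the pressure of the H\"older potential $-sf$, each inner sum is comparable to $e^{nP(-sf)}$, uniformly in $z_0$ because $T$ is topologically mixing on $\mathbf J$. For any $s>s(f)$ the definition of $s(f)$ gives $P(-sf)<0$; hence $\sum_n e^{nP(-sf)}<\infty$, and letting $N\to\infty$ yields $\mathcal H^s(D_{z_0}(f))=0$, so $\dim_H D_{z_0}(f)\leq s(f)$.

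For the lower bound, I would use the unique equilibrium state $\mu$ of the potential $-s(f)\,f$, whose existence and Gibbs property follow from Bowen-Ruelle-Sinai theory for expanding rational maps. Combined with $P(-s(f)f)=0$ and the conformal/Manning-type dimension formula, $\mu$ is exact-dimensional with $\dim\mu=s(f)$. It then suffices to prove $\mu(D_{z_0}(f))>0$ and to invoke the mass distribution principle. The Gibbs property gives $\mu\bigl(B(y,e^{-S_n f(y)})\bigr)\asymp e^{-s(f)\,S_n f(y)}$, so $\sum_n \mu\bigl(\bigcup_{T^n y=z_0}B(y,e^{-S_n f(y)})\bigr)$ diverges at the critical parameter because the inner sums are $\asymp e^{n P(-s(f)f)}=1$; exponential decay of correlations of H\"older observables then upgrades divergence to a full-measure Borel-Cantelli conclusion.

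The main obstacle is precisely this dynamical Borel-Cantelli step: the balls around distinct preimages of $z_0$ at generation $n$ may cluster, and across different $n$ the events are not independent. I would control this by replacing each target ball with a comparable dynamical ball adapted to a Markov partition of $\mathbf J$, so that the target events become unions of cylinder sets of bounded overlap, where quasi-independence is immediate from Gibbs mixing. The bounded distortion property of conformal expanding maps together with the hypothesis $f\geq\log|T'|$ is exactly what makes these dynamical and metric balls comparable up to universal constants, closing the argument.
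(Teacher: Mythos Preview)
The paper does not prove this statement. It appears in the Introduction as a quoted result of Hill and Velani \cite{Hil}, serving purely as historical motivation for the authors' own Theorems~\ref{main1} and~\ref{main2}, which concern symbolic systems with non-uniform structure rather than expanding rational maps on Julia sets. There is therefore no proof in the paper against which to compare your proposal.

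For what it is worth, your outline is broadly in the spirit of the Hill--Velani argument: the upper bound is indeed a covering argument using that $\sum_{T^n y = z_0} e^{-s S_n f(y)} \asymp e^{n P(-sf)}$ via the transfer operator, and the lower bound does pass through a Gibbs/equilibrium measure at the critical parameter together with a quasi-independence (``ubiquity'') argument. One caution: Hill--Velani do not literally use a Borel--Cantelli lemma in the form you describe but rather build a Cantor-type subset supporting a mass distribution, which sidesteps some of the overlap issues you flag; your sketch of controlling overlaps via a Markov partition and bounded distortion is plausible but would need care to make rigorous. In any case, none of this is what the present paper does, since the present paper simply cites the result.
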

In \cite{TanWan}, Tan and Wang investigated metric properties as well as estimations on the Hausdorff dimension of the recurrence set for $\beta$-transformation dynamical systems. More precisely,
 the $\beta$-transformation $T_{\beta}:[0,1]\to [0,1]$ is defined by $T_{\beta}=\beta x- \lfloor  \beta x\rfloor $  for all $x\in [0,1]$. And the spotlight is on the size of the set
$$ \{x \in X : d(T_{\beta}^{n}x, x)<\psi(n) ~\text{for infinitely many}~ n\},  $$
where $\psi(n)$ is a positive function. In fact, this evokes a rich subsequent work on the so-called Diophantine approximation, we refer the reader to \cite{Hil, TanWan, Wan, Fan} for the related work about this set. That is worth mentioning, the research of recurrence for Diophantine approximation are concentrated in the $\beta$-transformation. In other words, the question is whether we can give an quantitative estimate of recurrence for a more general dynamical systems.

In this paper, we consider a class of symbolic systems which is studied in \cite{CliTho}. That is, $(X,\sigma)$ is a symbolic system with non-uniform structure for the symbolic systems $(X,\sigma)$.  The non-uniform structure mainly defined as there exist  $\mathcal{G}\subset \mathcal{L}(X)$ has $(W)$-specification and  $\mathcal{L}(X)$ is edit approachable by $\mathcal{G}$. The detail definitions will be given in the next section.

Our main result is the following. Set the symbolic system $(X,\sigma, d)$  with $\sigma : ¡¡X \to X$ is shift map, and $d$ is the metric of $X$.
Respectively, $M(X), M_{\sigma}(X)$ denote the probability measure and invariant measure with weak$^{*}$ topology.
We define
$$ R(\psi):=\{x\in X: d(\sigma^{n}x,x)<\psi(n)~\text{for infinitely many}~n\in \mathbb N\} .$$

\begin{thm}\label{main1}
Let $X$ be a shift space with $\mathcal{L}=\mathcal{L}(X)$. Suppose that $\mathcal{G}\subset \mathcal{L}$ has $(W)$-specification and  $\mathcal{L}$ is edit approachable by $\mathcal{G}$, then for positive function $\psi(n):\mathbb N\to \mathbb R$:

{\bf C1)}If $\liminf\limits_{n\to\infty}\psi(n)>0$, then

 $${\rm dim}_{H}R(\psi)= h   ,$$

{\bf  C2)}If $\psi$ is nonincreasing, then

$${\rm dim}_{H}R(\psi)=\dfrac{h}{1+b},~\text{with}~b=\liminf\limits_{n\to\infty}\dfrac{-\log \psi(n)}{n},$$
where ${\rm dim}_{H}$ denotes the Hausdorff dimension of a set and $h:=h_{top}(X).$
\end{thm}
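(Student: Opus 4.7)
My overall plan is to reduce both cases to a single technical task—computing $\dim_H R(\psi)$ for nonincreasing $\psi$—so I would focus on case C2) and deduce C1) at the end. Throughout, I would use the symbolic metric $d(x,y) = e^{-\min\{i : x_i \ne y_i\}}$, for which the condition $d(\sigma^n x, x) < \psi(n)$ is equivalent to $x_{n+i} = x_i$ for $0 \le i < m_n$, where $m_n := \lceil -\log \psi(n) \rceil$. For this metric one has $\dim_H X = h_{\mathrm{top}}(X) = h$, so $\dim_H R(\psi) \le h$ is automatic in both cases.

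For the upper bound in C2), I would use the natural cylinder cover. Setting $A_n := \{x : d(\sigma^n x, x) < \psi(n)\}$, each $A_n$ is a union of at most $|\mathcal{L}_n(X)|$ cylinders of length $n+m_n$, hence of diameter at most $e^{-(n+m_n)}$. Fix $\epsilon > 0$. Since $h = \lim \tfrac{1}{n}\log|\mathcal{L}_n(X)|$, we have $|\mathcal{L}_n(X)| \le e^{n(h+\epsilon)}$ eventually, and by definition of $b$ we have $m_n \ge n(b-\epsilon)$ eventually. Hence for $s > (h+\epsilon)/(1+b-\epsilon)$ the tail $\sum_{n \ge N} |\mathcal{L}_n(X)| e^{-(n+m_n)s}$ converges and vanishes as $N \to \infty$. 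Because $R(\psi) \subset \bigcup_{n \ge N} A_n$ for every $N$, this forces $\mathcal{H}^s(R(\psi)) = 0$, and $\epsilon \to 0$ gives $\dim_H R(\psi) \le h/(1+b)$.

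For the lower bound in C2), I would build a Cantor-type subset $F \subset R(\psi)$ of dimension at least $(h-\epsilon)/(1+b+\epsilon)$. Using $h = h_{\mathrm{top}}(X)$ together with the fact that $\mathcal{L}$ is edit approachable by $\mathcal{G}$, pick a length $N$ and a collection $\mathcal{W} \subset \mathcal{G}_N$ with $|\mathcal{W}| \ge e^{N(h-\epsilon)}$, any two words of which can be concatenated through a glue word of uniformly bounded length provided by the $(W)$-specification of $\mathcal{G}$. Pick a very rapidly increasing subsequence $(n_k)$ realising the liminf, so $m_{n_k} \le n_k(b+\epsilon)$ and $n_{k+1} \gg n_k + m_{n_k}$. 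Each $x \in F$ is assembled in three kinds of blocks: on the segment preceding $n_k$, a free concatenation of $\mathcal{W}$-words interleaved with $(W)$-glue; on the positions $n_k, \ldots, n_k + m_{n_k} - 1$, a forced copy of $x_0\ldots x_{m_{n_k}-1}$ (which guarantees $d(\sigma^{n_k} x, x) < \psi(n_k)$ and hence $x \in R(\psi)$); and after $n_k + m_{n_k}$, the next free block. Assigning each cylinder of $F$ uniform mass $|\mathcal{W}|^{-(\text{number of free blocks})}$ and applying a mass distribution principle at scales $r \approx e^{-(n_k + m_{n_k})}$ yields the claimed dimension bound; sending $\epsilon \to 0$ finishes C2). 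Case C1) follows at once: $\liminf \psi > 0$ implies $R(\psi) \supset R(\psi_0)$ for some constant $\psi_0 > 0$, whence $b_0 = 0$ and the C2) lower bound gives $\dim_H R(\psi) \ge h$.

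The main obstacle is the lower bound construction in C2). The combinatorial skeleton (free/forced blocks with mass distribution) is standard, but making it compatible with the weaker hypotheses of $(W)$-specification for $\mathcal{G}$ and edit approachability of $\mathcal{L}$ by $\mathcal{G}$ requires careful bookkeeping. One must verify that the forced "copy" block at position $n_k$ is consistent with the $\mathcal{W}$-words chosen in the initial segment—together with the bounded-length glue words required to concatenate them—without destroying either legality of the resulting word in $\mathcal{L}$ or the exponential count of available configurations. Sparsity of $(n_k)$ is then exploited to absorb both the glue lengths and the cumulative length $\sum_{j<k} m_{n_j}$ into an $o(n_k)$ correction, so that the effective free length is $(1+o(1))\,n_k$ and the effective total length is $(1+b+\epsilon+o(1))\,n_k$. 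The precise form of the assumptions from \cite{CliTho} is exactly what is needed to carry out this absorption.
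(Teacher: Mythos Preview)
Your upper bound for C2 matches the paper exactly, and your reduction of C1 to C2 via a constant $\psi_0$ is a legitimate alternative to the paper's direct argument (the paper instead takes an ergodic measure of maximal entropy---available by expansiveness---and invokes Poincar\'e recurrence to get $\mu(R(\psi))=1$, hence $\dim_H R(\psi)\ge h_\mu=h$).

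For the lower bound in C2 your skeleton is right, but you are proposing to build the Cantor set directly from $\mathcal{G}$ using $(W)$-specification and glue words, and this is precisely where the paper does something different that dissolves the ``main obstacle'' you flag. The paper first invokes a structural lemma from \cite{CliTho} (stated here as Proposition~\ref{lemma2.2}): from $\mathcal{G}$ with $(W)$-specification and edit approachability one obtains $\mathcal{F}\subset\mathcal{L}$ with the \emph{free concatenation} property, still edit-approaching $\mathcal{L}$ (so $h(\mathcal{F})=h$). Working with $\mathcal{F}$ eliminates glue words entirely: one picks $M$ with $\log\sharp\mathcal{F}_M\ge(1-\eta)Mh$, replaces $n_k$ by a nearby $\hat n_k$ divisible by $M$, and chooses the copy length $t_k$ also divisible by $M$. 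Then every level-$k$ word has the form $(w_1\cdots w_{\hat n_k})^{r_k}$ with $w$ a concatenation of $\mathcal{F}_M$-blocks, and the periodic extension is automatically legal because $\mathcal{F}$ is closed under concatenation.

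Your direct $(W)$-specification route faces two concrete difficulties that the free-concatenation upgrade avoids. First, the glue word between the free segment and the forced copy has variable length $\le\tau$ depending on the words being joined, so the copy does not begin at a fixed position $n_k$; different branches of the tree then have different lengths at level $k$, which complicates both the recurrence verification and the mass-distribution estimate. Second, and more seriously, the forced copy $x_0\cdots x_{m_{n_k}-1}$ is a \emph{prefix} of a $\mathcal{G}$-word, and prefixes of $\mathcal{G}$-words need not lie in $\mathcal{G}$, so you cannot apply $(W)$-specification to attach the next free block to it. The paper's device of forcing $M\mid t_k$ ensures the copy is itself a full concatenation of $\mathcal{F}_M$-blocks, hence in $\mathcal{F}$. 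Your proposal could perhaps be repaired by padding the copy out to a $\mathcal{G}$-word boundary, but this is exactly the bookkeeping that passing to $\mathcal{F}$ renders unnecessary.
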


\noindent Let $f$ be a positive continuous function defined on $X$, set
$$R(f)=\{x\in X:~d(\sigma^{n}x,x)\leq e^{-S_{n}f(x)} ~\text{for infinitely many } ~n\in \mathbb N\}.$$
\begin{thm}\label{main2}
Let $X$ be a shift space with $\mathcal{L}=\mathcal{L}(X)$. Let $f$ be a positive continuous function defined on $X$. Suppose that $\mathcal{G}\subset \mathcal{L}$ has $(W)$-specification and  $\mathcal{L}$ is edit approachable by $\mathcal{G}$.
 The Hasudorff dimension of $R(f)$ is the unique solution $s$ of the following pressure equation
$$P(-s(f+1) )=0,$$
where $P(\bullet)$ denotes the topological pressure.
\end{thm}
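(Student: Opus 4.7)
The plan is to establish $\dim_H R(f) = s$ by matching upper and lower bounds, where $s$ is the unique root of the pressure equation $P(-s(f+1)) = 0$. Uniqueness of $s$ follows because $s \mapsto P(-s(f+1))$ is continuous and strictly decreasing (since $f+1 \ge 1+\min f > 1$). The overall strategy runs in parallel with the classical shrinking-target proofs of Hill--Velani and Tan--Wang in \cite{Hil, TanWan}, but both the cover used for the upper bound and the Moran-type construction used for the lower bound have to be built out of the specified subset $\mathcal{G}\subset\mathcal{L}$ using the $(W)$-specification and the edit approachability of $\mathcal{L}$ by $\mathcal{G}$.

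For the upper bound, I would fix $s' > s$ so that $P(-s'(f+1)) < 0$ and produce an explicit cover. If $x\in R(f)$ and $n$ satisfies $d(\sigma^n x,x)\le e^{-S_n f(x)}$, the shift metric forces the first $\lfloor S_n f(x)\rfloor$ coordinates of $\sigma^n x$ to equal those of $x$, so $x$ lies in a cylinder $[uv_u]$ with $u\in \mathcal{L}_n$ and $v_u$ a prefix of $u$ of length $k_n(u)\approx S_n f(y_u)$ for any $y_u\in [u]$ (the approximation error being $o(n)$ by uniform continuity of $f$). This yields the cover
\[
R(f) \;\subset\; \bigcap_{N\ge 1}\bigcup_{n\ge N}\bigcup_{u\in \mathcal{L}_n} [uv_u].
\]
The $s'$-Hausdorff pre-measure is then bounded by $\sum_{n\ge N} e^{-s'n}\sum_{u\in \mathcal{L}_n} e^{-s' S_n f(y_u)}$, and the inner sum is at most $e^{n(P(-s'f)+\varepsilon)}$ for $n$ large by the definition of topological pressure; the total is dominated by $\sum_{n\ge N} e^{n(P(-s'(f+1))+\varepsilon)}$, which tends to zero as $N\to\infty$. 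Hence $H^{s'}(R(f))=0$ and $\dim_H R(f)\le s$.

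For the lower bound, fix $s' < s$, so $P(-s'(f+1)) > 0$. By the variational principle I can select an invariant measure $\mu$ with $h_\mu(\sigma) - s'\int (f+1)\,d\mu > 0$. I would then build a Moran-type Cantor subset $F\subseteq R(f)$ of dimension at least $s'$. The building blocks come from a rapidly growing sequence $n_k$ and, at each level $k$, a large family $W_k\subset \mathcal{G}\cap \mathcal{L}_{n_k}$ of $\mu$-typical words whose cardinality and $S_{n_k}f$-values realise the pressure of $-s'f$ up to $o(n_k)$. The $(W)$-specification is used to glue one choice per level via connecting blocks of length $\tau_k$, and after each block $u$ of length $n_k$ its own prefix of length $\lceil \sup_{y\in[u]} S_{n_k} f(y)\rceil$ is appended so that the recurrence condition $d(\sigma^{n_k} x,x)\le e^{-S_{n_k} f(x)}$ is forced for every $x\in F$. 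A Bernoulli-type measure $\nu$ supported on $F$, with weights distributed uniformly inside each $W_k$, has local dimension at least $s' - o(1)$ by Shannon--McMillan--Breiman for $\mu$, and the mass distribution principle yields $\dim_H R(f)\ge s'$; letting $s'\nearrow s$ closes the gap.

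The hard part will be managing the two sources of non-uniformity. First, the $(W)$-specification gap $\tau_k$ depends on the words being concatenated, so I must show that its contribution inflates each Moran cylinder by only $o(n_k)$ and does not distort the local dimension ratio. Second, the edit approachability of $\mathcal{L}$ by $\mathcal{G}$ must be invoked to guarantee that the partition functions restricted to $\mathcal{G}$-words still capture the topological pressure of $-s'f$, so that the family $W_k$ chosen for the lower bound actually matches the upper-bound cover built from all of $\mathcal{L}$. Once both losses are absorbed into the $\varepsilon$-slack already present in the pressure inequalities $P(-s'(f+1))<0$ and $P(-s'(f+1))>0$, the Moran and covering calculations follow the template developed in \cite{Hil, TanWan, CliTho}.
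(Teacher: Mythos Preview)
Your upper bound matches the paper's: cover $R(f)$ by the sets $J(w_1,\ldots,w_n)\subset[w]$ of diameter at most $e^{-n-S_nf(y)+o(n)}$ and sum using $P(-s'(f+1))<0$.

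The lower bound has a genuine gap. You take blocks $u\in W_k\subset\mathcal{G}$, append to each its own prefix $p$ of length $\approx S_{n_k}f$ to force recurrence, and then glue consecutive levels with $(W)$-specification. But the appended prefix $p$ is \emph{not} a $\mathcal{G}$-word, and $(W)$-specification only supplies a connector between two $\mathcal{G}$-words; there is no reason the word $up$ admits any extension inside $X$, much less a short bridge to the next $\mathcal{G}$-block. Your ``hard part'' paragraph worries about connector \emph{lengths}, whereas the real obstruction is connector \emph{existence} once a non-$\mathcal{G}$ suffix has been attached. The paper resolves this by first invoking Proposition~\ref{lemma2.2} to pass from $\mathcal{G}$ to a collection $\mathcal{F}\subset\mathcal{L}$ with the \emph{free concatenation} property (still edit-approximating $\mathcal{L}$). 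All building blocks are then drawn from $\mathcal{F}_M$ for one fixed $M$, and the repetition lengths $t_k$ are taken divisible by $M$; hence the repeated word $(w_1\cdots w_{n_k})^{r_k}$ is itself a concatenation of $\mathcal{F}_M$-words, lies in $\mathcal{F}$, and the next level can be appended with no connector at all. This device is what keeps the Moran set inside $X$, and it is missing from your outline.

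A secondary difference, not a gap: you spread mass uniformly over $W_k$ using an ergodic $\mu$ chosen via the variational principle and read off the local dimension through Shannon--McMillan--Breiman and Birkhoff. The paper instead places the Gibbs-type weight $e^{-s_k(m_k+S_{m_k}f)}$ on each block, where $s_k=s_{m_k}(\mathcal{F})$ solves $\sum_{w\in\mathcal{F}_{m_k}}e^{-s(m_k+S_{m_k}f)}=1$, and uses Proposition~\ref{prop2.4} (that $s_k\to s$) in place of SMB. Either bookkeeping can be made to work once the admissibility issue above is handled, but the paper's choice meshes directly with the free-concatenation structure and avoids having to locate $\mu$-typical words inside $\mathcal{G}$.
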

This paper is organized as follows: In section 2, we give our definitions and some key propositions. In section 3, we give the proof of Theorem\ref{main1}. In section 4, we give the proof of Theorem\ref{main2}. In section 4, we give some applications in $\beta$-shifts and $S$-gap shifts.

\section{Preliminaries}
\subsection{non-uniform structure}
In this paper, we consider the symbolic space. Let $p\geq 2$ be an integer and $\mathcal{A}=\{1,\cdots,p\}$. Let
$$\mathcal{A}^{\mathbb N}=\{(w_{i})_{i=1}^{\infty}:~w_{i}\in \mathcal{A}~\text{for} ~i\geq 1\}.$$
Then $\mathcal{A}^{\mathbb N}$ is compact endowed with the product discrete topology.
And we can define the metric of $\mathcal{A}^{\mathbb N}$ as follows, for any $u,v\in \mathcal{A}^{\mathbb N}$, define
\begin{align*}
d(u,v):= e^{-|u\wedge v|},
\end{align*}
where $|u\wedge v|$ denote the maximal length $n$ such that $u_{1}=v_{1}, u_{2}=v_{2},\cdots, u_{n}=v_{n}$.
We say that $(X,\sigma)$ is a subshift over $\mathcal{A}$ if $X$ is a compact subset of $\mathcal{A}^{\mathbb N}$, and $\sigma(X)\subset X$, where $\sigma$ is the left shift map on $\mathcal{A}^{\mathbb N}$, and
$$\sigma((w_{i})_{i=1}^{\infty})=(w_{i+1})_{i=1}^{\infty},~~\forall~~ (w_{i})_{i=1}^{\infty}\in \mathcal{A}^{\mathbb N}.$$
In particular, $(X,\sigma)$ is called the full shift over $\mathcal{A}$ if $X=\mathcal{A}^{\mathbb N}.$ For $n\in \mathbb N$ and $w\in \mathcal{A}^{\mathbb N}$, we write
$$[w]=\{(w_{i})_{i=1}^{\infty}\in \mathcal{A}^{\mathbb N}: ~~~w_{1}\cdots w_{n}=w\},$$
 and call it an $n$-th word in $\mathcal{A}^{\mathbb N}$ and denote all the $n$-th word by $\mathcal{A}^{n}$. The language of $X$, denoted by $\mathcal{L}=\mathcal{L}(X)$, is the set of finite words that appear in some $x\in X$ that is
$$\mathcal{L}(X)=\{w\in \mathcal{A}^{*}: [w]\neq \emptyset\},$$
where $\mathcal{A}^{*}=\cup_{n\geq 0}\mathcal{A}^{n}$. Given $w\in \mathcal{L}$, let $|w|$ denote the length of $w$. For any collection $\mathcal{D}\subset \mathcal{L}$, let $\mathcal{D}_{n}$ denote $\{w\in \mathcal{D}: |w|=n\}.$ Thus, $\mathcal{L}_{n}$ is the set of all words of length $n$ that appear in sequences belonging to $X$. Given words $u,v$ we use juxtaposition $uv$ to denote the word obtained by concatenation.

\begin{defn}\rm{\cite{CliTho}}
Given a shift space $X$ and its language $\mathcal{L}$, consider a subset $\mathcal{G}\subset \mathcal{L}$. Given $\tau\in \mathbb N$, we say that $\mathcal{G}$ has $(W)$-specification with gap length $\tau$ if for every $v, w\in \mathcal{G}$ there is $u\in \mathcal{L}$ such that $vuw\in \mathcal{G}$ and $|u|\leq \tau.$
\end{defn}
\begin{defn}\rm{\cite{CliTho}}
Define an edit of a word $w=w_{1}\cdots w_{n}\in \mathcal{L}$ to be a transformation of $w$ by one of the following actions, where $w^{j}\in \mathcal{L}$ are arbitrary words and $a,a^{'}\in \mathcal{A}$ are arbitrary symbols.

(1) substitution: $w=u^{1}au^{2}\mapsto w^{'}=u^{1}a^{'}u^{2}.$

(2) Insertion: $w=u^{1}u^{2}\mapsto w^{'}=u^{1}a^{'}u^{2}.$

(3) Deletion: $w=u^{1}au^{2}\mapsto w^{'}=u^{1}u^{2}.$
\end{defn}
\noindent Given $v,w\in \mathcal{L}$, define the edit distance between $v$ and $w$ to be the minimum number of edits required to transform the word $v$ into the word $w$, we will denote this by $\hat{d}(v,w).$

The following proposition about describe the  size of balls in the edit metric.
\begin{prop}\rm{\cite{CliTho}}\label{lemmaball}
There is $C>0$ such that given $n\in \mathbb N, w\in \mathcal{L}_{n}$, and $\delta>0$, we have
$$\sharp\{v\in \mathcal{L}:~\hat{d}(v,w)\leq \delta n\}\leq Cn^{C}(e^{C\delta}e^{-\delta\log\delta})^{n}.$$
\end{prop}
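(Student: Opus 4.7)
The plan is to bound the ball in the edit metric by a direct combinatorial count. Since we only need an upper bound, I can ignore the constraint $v \in \mathcal{L}$ and simply count all words over $\mathcal{A}$ that are within edit distance $\delta n$ of $w$. For each integer $k$ with $0 \le k \le \lfloor \delta n \rfloor$, I will count the number of words obtainable from $w$ by at most $k$ edits, then sum over $k$.

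First I would fix $k$ and choose a composition $k = k_s + k_i + k_d$ recording how many of the $k$ edits are substitutions, insertions, and deletions respectively; there are at most $\binom{k+2}{2} \le (k+1)^2$ such compositions. Given the composition, I would select the positions in $w$ at which substitutions and deletions act (at most $\binom{n}{k_s+k_d}$ choices) and the positions in the length-$(n + k_i - k_d)$ output word at which insertions occur (at most $\binom{n+k_i}{k_i}$ choices). Finally, I would assign the new symbol for each substitution and insertion ($p^{k_s+k_i}$ choices). Multiplying these and dropping dependence on the composition gives a bound of the form $(k+1)^2 \, p^k \binom{n+k}{k}^2$ on the number of distinct words obtainable with exactly $k$ edits.

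The key estimate is the binomial coefficient. Writing $k = \delta' n$ with $\delta' \le \delta$ and using Stirling,
\begin{equation*}
\binom{n+k}{k} \le \left(\frac{e(n+k)}{k}\right)^{k} \le \left(\frac{2e}{\delta'}\right)^{\delta' n} = \exp\bigl(\delta' n (1 + \log(2e)) - \delta' n \log \delta'\bigr).
\end{equation*}
Since the function $\delta' \mapsto -\delta'\log\delta'$ is increasing on $(0,e^{-1})$ and bounded on $(e^{-1}, \infty)$, while $e^{C\delta}$ is increasing in $\delta$, one checks that the bound $\exp(C\delta n - \delta n \log \delta)$ dominates the bound obtained by replacing $\delta$ with any $\delta' \le \delta$ (after absorbing a constant into $C$). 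Combining with the $p^k$ factor and the squared binomial gives, for each $k$, a bound of the shape $\mathrm{poly}(n)\cdot (e^{C\delta}e^{-\delta\log\delta})^n$.

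Finally I would sum over $k = 0, 1, \dots, \lfloor \delta n \rfloor$, which contributes a factor of at most $\delta n + 1 \le n+1$ and is absorbed into the $Cn^C$ prefactor by adjusting $C$. The main nuisance, rather than a deep obstacle, is keeping the two-sided indexing for insertions honest: an insertion can occur at any of the $n+k_i$ slots of the growing word, and different sequences of edits can yield the same word, so the count is genuinely an overcount and one must be careful to avoid undercounting positions. Handling this by the uniform bound $\binom{n+k}{k}^2$ above (positions for deletions/substitutions in $w$, positions for insertions in the output) suffices and gives exactly the stated form.
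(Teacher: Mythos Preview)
The paper does not supply a proof of this proposition; it is simply quoted from Climenhaga--Thompson--Yamamoto \cite{CliTho} and then used as a black box. So there is no ``paper's proof'' to compare against.

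Your direct combinatorial count is the standard argument and is essentially correct. Two small points worth tightening: (i) after choosing the $k_s+k_d$ positions in $w$ you must also record which of them are substitutions and which are deletions, costing an extra factor $\binom{k_s+k_d}{k_s}\le 2^{k}$, which is harmless and absorbs into $e^{C\delta n}$; (ii) the monotonicity claim for $\delta'\mapsto -\delta'\log\delta'$ only holds on $(0,e^{-1})$, so for $\delta\ge e^{-1}$ you should simply note that $-\delta'\log\delta'\le e^{-1}\le \delta$ and fold that into the $e^{C\delta n}$ term as well. With those adjustments the bound $(k+1)^2\,2^{k}p^{k}\binom{n+k}{k}^{2}$ per level, summed over $k\le \delta n$, yields exactly the stated form $Cn^{C}(e^{C\delta}e^{-\delta\log\delta})^{n}$.
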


  Now we introduce the key definition, which requires that any word in $\mathcal{L}$ can be transformed into a word in $\mathcal{G}$ with a relatively small number of edits.

\begin{defn}\rm{\cite{CliTho}}
Say that a non-decreasing function $g:\mathbb N\to \mathbb N$ is a mistake function if $\frac{g(n)}{n}$ converges to $0$. We say that $\mathcal{L}$ is edit approachable by $\mathcal{G}$, where $\mathcal{G}\subset \mathcal{L}$, if there is a mistake function $g$ such that for every $w\in \mathcal{L}$, there exists $v\in \mathcal{G}$ with $\hat{d}(v,w)\leq g(|w|)$.
\end{defn}

\noindent We can get the following lemma, by applying[\cite{CliTho}, Proposition 4.2 and Lemma 4.3].
\begin{prop}\label{lemma2.2}
If $\mathcal{G}$ has $(W)$-specification, then there exist $\mathcal{F}\subset \mathcal{L}$, which has free concatenation property (if for all $u,w\in \mathcal{F}$, we have $uw\in\mathcal{F}$) and $\mathcal{L}$ is edit approachable by $\mathcal{F}.$
\end{prop}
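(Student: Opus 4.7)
The plan is to reduce the proposition to two results of Climenhaga--Thompson \cite{CliTho}. Let $\tau$ be the gap length of the $(W)$-specification for $\mathcal{G}$, and for each ordered pair $(v,w)\in\mathcal{G}\times\mathcal{G}$ fix once and for all a bridging word $u(v,w)\in\mathcal{L}$ with $|u(v,w)|\leq\tau$ and $v\,u(v,w)\,w\in\mathcal{G}$. First I would invoke \cite[Proposition 4.2]{CliTho}: the bridging words above let one glue $\mathcal{G}$-blocks together, and the construction there produces a subcollection $\mathcal{F}\subset\mathcal{L}$ built from strings of the form $v^{1}u(v^{1},v^{2})v^{2}\cdots u(v^{k-1},v^{k})v^{k}$ (together with mild anchoring data at the two ends), arranged so that any juxtaposition $ab$ of two $\mathcal{F}$-words reparses into the same structural template. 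In particular $\mathcal{F}$ has the free concatenation property.

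Second, to obtain edit approachability of $\mathcal{L}$ by $\mathcal{F}$, I would invoke \cite[Lemma 4.3]{CliTho}. The key observation is that every word in $\mathcal{G}$ lies within a bounded ($O(1)$) edit distance of a word in $\mathcal{F}$, since one only needs to insert the anchor and bridging symbols at the two ends. Composing with the paper's standing hypothesis that $\mathcal{L}$ is edit approachable by $\mathcal{G}$ via a mistake function $g$ then shows that, for each $w\in\mathcal{L}_{n}$, there is an element of $\mathcal{F}$ at total edit distance at most $g(n)+O(1)$ from $w$. The new function $g'(n):=g(n)+O(1)$ is still non-decreasing with $g'(n)/n\to 0$, hence $\mathcal{L}$ is edit approachable by $\mathcal{F}$.

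The main obstacle is the free concatenation property itself. The most naive definition of $\mathcal{F}$, as iterated $\mathcal{G}$-blocks glued by fixed bridging words, fails to be closed under juxtaposition: the concatenation of two such words leaves an unbridged seam between the final block of the first word and the initial block of the second. Climenhaga--Thompson circumvent this by allowing the bridging words in the definition of $\mathcal{F}$ to vary (subject to $|\cdot|\leq\tau$), so that the leftover connector at a seam itself fits into the structural template. I expect that spelling out the explicit verification that $\mathcal{F}$ remains inside $\mathcal{L}$ and is truly closed under juxtaposition is where most of the technical work lies, and it is precisely this that \cite{CliTho} carries out.
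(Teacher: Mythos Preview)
Your proposal is correct and matches the paper's own argument: the paper states the proposition with the one-line justification ``by applying [\cite{CliTho}, Proposition 4.2 and Lemma 4.3],'' and you invoke precisely these two results in the same roles (Proposition~4.2 to build the freely concatenable $\mathcal{F}$, Lemma~4.3 together with the standing hypothesis that $\mathcal{L}$ is edit approachable by $\mathcal{G}$ to transfer edit approachability to $\mathcal{F}$). Your additional commentary on the seam issue and the $O(1)$ edit cost is a reasonable gloss on what \cite{CliTho} actually does, but the paper itself offers no further detail beyond the citation.
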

\begin{rem} We do not have $\mathcal{F}_{n}\neq \emptyset,$ for each $n\in \mathbb N.$
\end{rem}
 To estimate the lower bound, we need the following distribution theorem.
\begin{thm} {\rm \cite{Pes}}\label{thg}
Let $E$ be a Borel measurable set in $X$ and $\mu$ be a Borel measure with $\mu(E)>0$. Assume that there exist two positive  constants $c,\eta$ such that, for any set $U$ with diameter ${\rm diam} U<\eta$, $\mu(U)\leq c{\rm diam}(U)^{s},$ then
$${\rm dim }_{H}E\geq s.$$
\end{thm}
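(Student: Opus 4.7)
The plan is to derive the lower bound on $\dim_{H}E$ from a uniform lower bound on the $s$-dimensional Hausdorff measure $\mathcal{H}^{s}(E)$, exploiting the hypothesis that $\mu$ cannot concentrate too much mass on sets of small diameter. Recall that for $\delta>0$ the $\delta$-approximant of Hausdorff measure is
$$\mathcal{H}^{s}_{\delta}(E)=\inf\left\{\sum_{i}({\rm diam}\,U_{i})^{s}:\{U_{i}\}\text{ is a countable cover of }E\text{ with }{\rm diam}\,U_{i}<\delta\right\},$$
and $\mathcal{H}^{s}(E)=\lim_{\delta\to 0^{+}}\mathcal{H}^{s}_{\delta}(E)$. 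Since $\dim_{H}E$ is characterized as the critical exponent at which $\mathcal{H}^{t}(E)$ drops from $+\infty$ to $0$, it suffices to prove $\mathcal{H}^{s}(E)>0$.

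First I would fix any $\delta\in(0,\eta)$ and let $\{U_{i}\}_{i\geq 1}$ be an arbitrary countable cover of $E$ with ${\rm diam}\,U_{i}<\delta$. Discarding indices for which $U_{i}\cap E=\emptyset$, and replacing each remaining $U_{i}$ by its closure $\overline{U_{i}}$ (which is a Borel set with the same diameter, so $\mu(\overline{U_{i}})$ is well defined and the cover-sum is unchanged), countable subadditivity of $\mu$ gives
$$\mu(E)\leq \sum_{i}\mu(\overline{U_{i}}).$$
Since ${\rm diam}\,\overline{U_{i}}={\rm diam}\,U_{i}<\delta<\eta$, the standing hypothesis supplies $\mu(\overline{U_{i}})\leq c({\rm diam}\,U_{i})^{s}$ uniformly in $i$, whence
$$\mu(E)\leq c\sum_{i}({\rm diam}\,U_{i})^{s}.$$

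Taking the infimum over all admissible $\delta$-covers yields $\mathcal{H}^{s}_{\delta}(E)\geq \mu(E)/c$, and the right-hand side does not depend on $\delta$. Sending $\delta\to 0^{+}$ then gives $\mathcal{H}^{s}(E)\geq \mu(E)/c>0$, which forces $\dim_{H}E\geq s$. The only bookkeeping points to track are the Borel-measurability of the cover elements (handled by passing to closures so that $\mu$ is defined on them) and the uniformity of the mass bound across the cover (ensured by choosing $\delta<\eta$); beyond these, the proof is a one-line invocation of countable subadditivity together with the definition of Hausdorff measure, which is really the cleanest form of the Frostman mass-distribution principle.
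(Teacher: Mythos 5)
Your proof is correct, and there is nothing in the paper to compare it against: the paper states this result as a citation to Pesin's book (the mass distribution principle, sometimes called Frostman's lemma) and gives no proof of its own, while your argument is exactly the standard one found in that reference --- for a cover by sets of diameter less than $\delta<\eta$, countable subadditivity plus the uniform bound $\mu(U)\leq c\,{\rm diam}(U)^{s}$ gives $\mathcal{H}^{s}_{\delta}(E)\geq \mu(E)/c$, hence $\mathcal{H}^{s}(E)\geq \mu(E)/c>0$ and $\dim_{H}E\geq s$. Your bookkeeping (discarding sets disjoint from $E$, passing to closures so that $\mu$ is defined on the cover elements without changing diameters, and restricting to $\delta<\eta$ so the mass bound applies uniformly) is precisely what is needed.
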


\subsection{Topological pressure}
Given a collection $\mathcal{D}\subset \mathcal{L}$, the entropy of $\mathcal{D}$ is $$h(\mathcal{D}):=\limsup\limits_{n\to\infty}\dfrac{1}{n}\log \sharp \mathcal{D}_{n},$$
where $\mathcal{D}_{n}=\{w\in \mathcal{D}:~|w|=n\}.$   For a fixed potential function $\varphi\in C(X)$, the pressure of $\mathcal{D}\subset \mathcal{L}$ is
$$P(\mathcal{D},\varphi):=\limsup\limits_{n\to\infty} \frac{1}{n}\log \Lambda_{n}(\mathcal{D},\varphi),$$
where $\Lambda_{n}(\mathcal{D},\varphi)=\sum_{w\in \mathcal{D}_{n}}e^{\sup_{x\in [w]}S_{n}\varphi(x)}$
and $S_{n}\varphi(x)=\sum_{k=0}^{n-1}\varphi(\sigma^{k}x).$ We write $P(\varphi):=P(\mathcal{L},\varphi)$.
\begin{prop}\rm{\cite{CliTho}}\label{prop2.3}
If $\mathcal{L}$ is edit approachable by $\mathcal{G}$, then $P(\mathcal{G}, \varphi)=P(\varphi)$ for every $\varphi\in C(X).$
\end{prop}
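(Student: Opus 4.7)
The plan is to prove two inequalities. The easy direction $P(\mathcal{G}, \varphi) \leq P(\varphi)$ is immediate from $\mathcal{G} \subset \mathcal{L}$, since $\Lambda_n(\mathcal{G}, \varphi) \leq \Lambda_n(\mathcal{L}, \varphi)$ for every $n$. For the harder reverse inequality $P(\varphi) \leq P(\mathcal{G}, \varphi)$, I would exploit the edit approachability hypothesis together with the ball-counting estimate of Proposition 2.1. Let $g$ be the mistake function witnessing edit approachability. For each $w \in \mathcal{L}_n$, fix some $v(w) \in \mathcal{G}$ with $\hat{d}(v(w), w) \leq g(n)$; since each edit alters length by at most one, $m := |v(w)| \in [n - g(n),\, n + g(n)]$. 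This defines a map $\mathcal{L}_n \to \bigcup_{m \in [n-g(n),\, n+g(n)]} \mathcal{G}_m$, and I would partition $\mathcal{L}_n$ into fibers $A_v := \{w \in \mathcal{L}_n : v(w) = v\}$.

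The first estimate bounds the multiplicity of this map. Each $A_v$ sits inside the edit-ball $\{w \in \mathcal{L} : \hat{d}(v, w) \leq g(n)\}$; with $\delta_m := g(n)/m$, Proposition 2.1 yields $|A_v| \leq C m^C (e^{C \delta_m} e^{-\delta_m \log \delta_m})^m$. Since $g(n)/n \to 0$ and $m$ differs from $n$ by at most $g(n)$, we have $\delta_m \to 0$, and hence both factors $e^{C\delta_m}$ and $e^{-\delta_m \log \delta_m}$ tend to $1$. Thus $|A_v| \leq e^{\beta_n n}$ for some $\beta_n \to 0$ independent of $v$, so each preimage contributes only a subexponential overcount.

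The second estimate compares the pressure weights. For $w \in A_v$ with $|v| = m$, the goal is to show
$$\sup_{x \in [w]} S_n \varphi(x) \leq \sup_{y \in [v]} S_m \varphi(y) + \gamma_n n,$$
where $\gamma_n \to 0$ depends only on $\varphi$ and $g(n)/n$. This rests on uniform continuity of $\varphi$ on the compact space $X$, which gives $v_k(\varphi) := \sup\{|\varphi(x)-\varphi(y)| : |x \wedge y| \geq k\} \to 0$, and on constructing $y \in [v]$ from $x \in [w]$ by executing the edit sequence realizing $\hat{d}(v, w)$: because all but $\leq g(n)$ positions are unchanged, the Birkhoff sums differ by at most $g(n)(2\|\varphi\|_\infty + o(1)) + |n - m|\|\varphi\|_\infty = o(n)$. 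Combining the counting and weight estimates,
$$\Lambda_n(\mathcal{L}, \varphi) \leq e^{(\beta_n + \gamma_n) n} \sum_{m = n - g(n)}^{n + g(n)} \Lambda_m(\mathcal{G}, \varphi),$$
and passing to $\frac{1}{n} \log$ and $\limsup$, the $\beta_n, \gamma_n$ terms vanish, the summation over $O(g(n))$ indices contributes only $\log(2g(n)+1) = o(n)$, and the $\limsup$ over $m(n)$ with $m/n \to 1$ recovers $P(\mathcal{G}, \varphi)$.

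The main obstacle is making the weight-comparison step rigorous: one must extract from an arbitrary $x \in [w]$ nearly achieving the supremum a genuine point $y \in [v] \subset X$ produced by executing the substitution, insertion, and deletion edits inside the subshift, and bound the discrepancy of Birkhoff sums across the at most $g(n)$ affected coordinates using only continuity of $\varphi$. The purely combinatorial ball-count from Proposition 2.1 handles cardinalities but ignores potential weights, so this is where uniform continuity of $\varphi$ and a careful traversal of the edit positions become essential.
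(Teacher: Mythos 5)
The paper itself gives no proof of this proposition---it is quoted directly from \cite{CliTho}---and your argument is essentially the one used in that reference: the trivial inequality from $\mathcal{G}\subset\mathcal{L}$, fiber-counting via the edit-ball estimate of Proposition 2.1 (correctly noting that $\delta_m\to 0$ makes the factor $e^{C\delta_m}e^{-\delta_m\log\delta_m}$ subexponential), and a Birkhoff-sum comparison along the edit alignment. Your sketch is sound; the one step to tighten is the weight comparison, where the bound $g(n)(2\|\varphi\|_\infty+o(1))+|n-m|\|\varphi\|_\infty$ should really be the two-scale estimate $n\,v_K(\varphi)+O\big(K\,g(n)\,\|\varphi\|_\infty\big)$ for an auxiliary window $K$ (a position whose symbol is unchanged but which lies within $K$ of an edit still sees a perturbed tail), which is $o(n)$ upon letting $n\to\infty$ and then $K\to\infty$; note also that $\mathcal{G}_m$ may be empty for some $m$ in your range (cf.\ Remark 2.1), which is harmless since empty levels contribute zero to $\sum_m \Lambda_m(\mathcal{G},\varphi)$ while at least one level is nonempty.
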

In the following, we set
 $\mathcal{N}(\mathcal{F}):=\{n\in \mathbb N, \mathcal{F}_{n}\neq \emptyset\}.$

\begin{defn}
Let $g=-(f+1)\in C(X), f>0$  and $\mathcal{L}$ is edit approachable by $\mathcal{F}.$

(1) For any $n\geq 1$, define $s_{n}(X)$ to be the unique solution of the equation
$$\sum_{w\in \mathcal{L}_{n} }\big(e^{\sup_{x\in [w] }S_{n}g(x)}\big)^{s}=1.$$

(2) For any $n\geq 1$  and $n\in \mathcal{N}(\mathcal{F})$, define $ s_{n}(\mathcal{F})$ to be the unique solution of the equation
$$\sum_{w\in \mathcal{F}_{n} }\big(e^{\sup_{x\in [w] }S_{n}g(x)}\big)^{ s }=1.$$

\end{defn}
\begin{rem}
Since $f+1>1$ is a continuous function on $X$, the above definitions is well defined.
\end{rem}

\begin{prop}\label{prop2.4}
 Assume   $s(\mathcal{F})$ to be the solution of the pressure equations
$$ P(\mathcal{F},-s(f+1) )=0.$$
   For the increasing sequence $\{n_{j}\}_{j\geq 1}= \mathcal{N}(\mathcal{F}),$  we have
\begin{align*}
\begin{split}
 \lim\limits_{j\to\infty}s_{n_{j}}(\mathcal{F})=s(\mathcal{F}).
\end{split}
\end{align*}
 \end{prop}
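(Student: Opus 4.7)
The plan is to reduce the proposition to pointwise convergence of partition functions and conclude by monotonicity. Set $g = -(f+1) \in C(X)$; since $f > 0$, $g < -1$. For $n \in \mathcal{N}(\mathcal{F})$ define $P_n(s) := \frac{1}{n}\log \Lambda_n(\mathcal{F}, sg)$. Then $s_n(\mathcal{F})$ is the unique zero of the continuous, strictly decreasing function $P_n$, and $s(\mathcal{F})$ is the unique zero of $s \mapsto P(\mathcal{F}, sg) = \limsup_{n \in \mathcal{N}(\mathcal{F})} P_n(s)$. Once I establish $\lim_{n \to \infty,\, n \in \mathcal{N}(\mathcal{F})} P_n(s) = P(\mathcal{F}, sg)$ for every fixed $s \geq 0$, the conclusion $s_{n_j}(\mathcal{F}) \to s(\mathcal{F})$ follows immediately: for any $\epsilon > 0$ strict monotonicity yields $P(\mathcal{F}, (s(\mathcal{F}) + \epsilon)g) < 0 < P(\mathcal{F}, (s(\mathcal{F}) - \epsilon)g)$, and pointwise convergence transfers these strict inequalities to $P_n$ for all large $n \in \mathcal{N}(\mathcal{F})$, forcing $s_n(\mathcal{F}) \in (s(\mathcal{F}) - \epsilon, s(\mathcal{F}) + \epsilon)$ since $P_n$ is continuous and strictly decreasing.

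The central estimate is an approximate super-additivity of $\log \Lambda_n(\mathcal{F}, sg)$ coming from the free concatenation of $\mathcal{F}$. Given $u \in \mathcal{F}_m$ and $v \in \mathcal{F}_n$, free concatenation yields $uv \in \mathcal{F}_{m+n}$ and the map $(u,v) \mapsto uv$ is injective. For any $x \in [uv]$ we have $x \in [u]$ and $\sigma^m x \in [v]$, so the uniform continuity of $f$ on the compact space $X$ (with metric $d(x,y) = e^{-|x \wedge y|}$) gives
\[
S_m g(x) \geq \sup\nolimits_{[u]} S_m g - K_m, \qquad S_n g(\sigma^m x) \geq \sup\nolimits_{[v]} S_n g - K_n,
\]
where $K_n := \sum_{k=1}^n \omega_{f+1}(e^{-k})$ and $\omega_{f+1}$ is the modulus of continuity of $f+1$; by Ces\`{a}ro averaging $K_n = o(n)$. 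Summing the resulting lower bound on $\sup_{[uv]} S_{m+n} g$ over $(u,v) \in \mathcal{F}_m \times \mathcal{F}_n$ and taking logarithms produces
\[
\log \Lambda_{m+n}(\mathcal{F}, sg) \geq \log \Lambda_m(\mathcal{F}, sg) + \log \Lambda_n(\mathcal{F}, sg) - s(K_m + K_n).
\]
Combined with the trivial bound $\Lambda_n(\mathcal{F}, sg) \leq \Lambda_n(\mathcal{L}, sg)$, the sub-additivity of $\log \Lambda_n(\mathcal{L}, sg)$ in $n$, and Proposition \ref{prop2.3} giving $P(\mathcal{F}, sg) = P(sg)$, this yields $\limsup_n P_n(s) = P(\mathcal{F}, sg)$; the remaining task is to upgrade the $\limsup$ to a genuine limit.

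Since $K_n$ need not be bounded when $f$ is merely continuous, I plan to introduce the locally constant approximation $f_N(x) := \inf_{y \in [x_1\cdots x_N] \cap X} f(y)$. The potential $f_N + 1$ is constant on $N$-cylinders, so its modulus of continuity vanishes at scales $\leq e^{-N}$ and hence the associated variation sum $K_n^{(N)}$ is uniformly bounded in $n$. For such Bowen potentials the approximate super-additivity has bounded error, so the shift $b_n := a_n - C$ is genuinely super-additive and Fekete's lemma yields $\lim_n \tfrac{1}{n}\log \Lambda_n(\mathcal{F}, -s(f_N+1)) = P(\mathcal{F}, -s(f_N+1))$. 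Since $\|f - f_N\|_\infty \leq \omega_f(e^{-N}) \to 0$, the quantities $P_n(s)$ and $P(\mathcal{F}, sg)$ differ from their $f_N$-counterparts by at most $s\,\omega_f(e^{-N})$, uniformly in $n$, and a diagonal $(N,n)$-argument concludes $\lim_n P_n(s) = P(\mathcal{F}, sg)$. The main obstacle, which this approximation resolves, is that generic continuous potentials lack the Bowen summability $\sum_k \omega_{f+1}(e^{-k}) < \infty$ needed to apply Fekete's lemma directly on $f$; passing through the locally constant approximants restores the classical bounded-error setup at the price of a routine uniform perturbation estimate.
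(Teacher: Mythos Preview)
Your proof is correct and substantially more careful than the paper's, and it proceeds along a genuinely different line. The paper argues in three strokes: it bounds $s_n(\mathcal{F})$ between $\tfrac{\log \sharp \mathcal{F}_n}{n\,\|f+1\|_{\max}}$ and $\tfrac{\log \sharp \mathcal{F}_n}{n\,\|f+1\|_{\min}}$, then asserts that ``by the continuity of the pressure function $f \mapsto P(\mathcal{F},f)$'' both $\liminf_j s_{n_j}(\mathcal{F})$ and $\limsup_j s_{n_j}(\mathcal{F})$ are solutions of $P(\mathcal{F},-s(f+1))=0$, and finally invokes uniqueness. The middle step is not fully justified as written: because $P(\mathcal{F},\varphi)$ is defined as a $\limsup$, one readily gets $\limsup_j s_{n_j}(\mathcal{F}) \le s(\mathcal{F})$ (if $s>s(\mathcal{F})$ then $\limsup_n P_n(s)<0$, forcing $P_n(s)<0$ and hence $s_n(\mathcal{F})<s$ for all large $n$), but the companion inequality $\liminf_j s_{n_j}(\mathcal{F}) \ge s(\mathcal{F})$ genuinely needs $\tfrac{1}{n}\log\Lambda_n(\mathcal{F},sg)$ to \emph{converge}, not merely to have the right $\limsup$. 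Your approach supplies exactly this missing ingredient: free concatenation gives approximate super-additivity of $\log\Lambda_n(\mathcal{F},sg)$, the locally constant approximants $f_N$ reduce the error to a bounded constant so that Fekete's lemma applies, and a uniform $\|f-f_N\|_\infty$ perturbation estimate transfers the conclusion back to $f$. What the paper's argument buys is brevity if one is willing to take the existence of $\lim_n \tfrac{1}{n}\log\Lambda_n(\mathcal{F},\varphi)$ as a standard consequence of free concatenation; what your argument buys is that this existence is actually proved, for arbitrary continuous $f$, without any Bowen-type summability hypothesis. One small point to make explicit: since $\mathcal{N}(\mathcal{F})$ need not be all of $\mathbb N$ but is closed under addition, your Fekete step should be phrased on this additive sub-semigroup; the standard proof of Fekete's lemma goes through there without change.
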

\begin{proof}
By virtue of the definition of $P(\mathcal{F},-s(f+1) )=0 $, it is easy to see that the solution of $ P(\mathcal{F},-s(f+1) )=0 $ is unique and
pressure function $f\mapsto P_{top}(\mathcal{F},f )$ is continuous. Accordingly,
we claim that $s_{n}(\mathcal{F})$ is bounded for each $n\in \mathcal{N}(\mathcal{F})$. This is because
$$\sum_{\mathcal{F}_{n}}e^{-ns_{n}(\mathcal{F})||f+1||_{max}}\leq 1\leq \sum_{\mathcal{F}_{n}}e^{-ns_{n}(\mathcal{F})||f+1||_{min}}.$$
Then
$$0<\frac{1}{||f+1||_{max}}\frac{\log \sharp \mathcal{F}_{n}}{n}\leq s_{n}(\mathcal{F})\leq \frac{1}{||f+1||_{min}}\frac{\log \sharp \mathcal{F}_{n}}{n}.$$
With the fact that $\limsup\limits_{n\to\infty}\frac{\log \sharp \mathcal{F}_{n}}{n}\leq \lim\limits_{n\to\infty}\frac{\log \sharp \mathcal{L}_{n}}{n}=h_{top}(X)$, we can know it is bounded.
 Moreover, by the continuity of pressure function $f\mapsto P_{top}(\mathcal{F},f )$, it is easy to get $\liminf\limits_{ j \to\infty}s_{n_{j}}(\mathcal{F})$ and $\limsup\limits_{ j \to\infty}s_{n_{j}}(\mathcal{F})$ are the solution of  $P(\mathcal{F},-s(f+1) )=0 $.
Hence,
$$\lim\limits_{j\to\infty}s_{n_{j}}(\mathcal{F})=s(\mathcal{F}).$$
\end{proof}

\begin{cro}If $\mathcal{L}$ is edit approachable by $\mathcal{F}$. Assume $s (X)$ and $s(\mathcal{F})$ to be, respectively, the solution of the pressure equations
 $P(-s(f+1) )=0,~ P(\mathcal{F},-s(f+1) )=0,$  then
 $$s(X)= s (\mathcal{F}).$$
\end{cro}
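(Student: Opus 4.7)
The plan is to deduce this corollary almost immediately from Proposition \ref{prop2.3}, which we are allowed to invoke since it appears earlier in the excerpt. Concretely, for each real $s$ the function $\varphi_s:=-s(f+1)$ lies in $C(X)$, so Proposition \ref{prop2.3} applied to $\varphi_s$ (with $\mathcal{G}=\mathcal{F}$) gives
\[
P(\mathcal{F},-s(f+1))\;=\;P(-s(f+1))\qquad\text{for every }s\in\mathbb{R}.
\]
In particular, a value of $s$ satisfies $P(\mathcal{F},-s(f+1))=0$ if and only if it satisfies $P(-s(f+1))=0$. Hence $s(\mathcal{F})$ solves the pressure equation defining $s(X)$, and vice versa.

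The only thing that remains is to justify that the equation $P(-s(f+1))=0$ has a unique solution, so that the coincidence above forces $s(X)=s(\mathcal{F})$. I would argue this from strict monotonicity of $s\mapsto P(-s(f+1))$. Since $f$ is positive and continuous on the compact space $X$, we have $f+1\ge 1+\min f>1$, and standard variational/linearity properties of topological pressure yield
\[
P(-s_2(f+1))-P(-s_1(f+1))\;\le\;-(s_2-s_1)\,(1+\min f)\qquad(s_1<s_2),
\]
so the map $s\mapsto P(-s(f+1))$ is strictly decreasing and continuous; it therefore has at most one zero. (The same argument, restricted to $\mathcal{F}$, was already used implicitly in Proposition \ref{prop2.4} to guarantee uniqueness of $s(\mathcal{F})$.)

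Combining the two observations, $s(X)$ and $s(\mathcal{F})$ are both the unique zero of the same continuous strictly decreasing function $s\mapsto P(-s(f+1))$, whence $s(X)=s(\mathcal{F})$. The proof is essentially just quoting Proposition \ref{prop2.3} and invoking monotonicity, and I do not anticipate any genuine obstacle; the only point one must be careful about is ensuring strict positivity of $f+1$ so that monotonicity is strict and the solution truly is unique.
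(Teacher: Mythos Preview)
Your proposal is correct and matches the paper's intended approach: the corollary is stated without proof, evidently meant to follow immediately from Proposition~\ref{prop2.3} (which gives $P(\mathcal{F},\varphi)=P(\varphi)$ for all $\varphi\in C(X)$) together with the uniqueness of the zero already noted in Proposition~\ref{prop2.4}. Your additional care in spelling out the strict monotonicity argument for uniqueness is a welcome clarification rather than a departure.
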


 \section{Proof of Theorem\ref{main1}}
 Firstly, we consider that {\bf C1)}.
By $\liminf\limits_{n\to \infty}\psi(n)>0$. Namely, there exists $\epsilon_{0}>0$, and $N>0 $ such that for any $n\geq N$,  $ \psi(n) \geq \epsilon_{0}.$
Clearly, we have $b=0.$ We only need to show that ${\rm dim}_{H}R(\psi)\geq h.$ By the upper semi-continuity of the entropy map, we can choose ergodic measure $\mu$ such that $h_{\mu}(\sigma)=h.$
By Poincar\'{e}  recurrence theorem, we have $\mu(R(\psi))=1$.
Hence,
$${\rm dim }_{H}R(\psi)\geq {\rm dim}_{H}\mu=\lim\limits_{n\to \infty}\frac{-\log \mu(I_{n}(x))}{n}=h_{\mu}(\sigma).$$

\noindent Secondly, the proof of {\bf C2)} is divided into two parts.
 \subsection{Upper bound}

 The upper bound can be obtained by considering the natural covering system. Evidently,
 $$R(\psi)=\bigcap_{N=1}^{\infty}\bigcup_{n=N}^{\infty}\bigcup_{(w_{1},w_{2},\cdots ,w_{n})\in\mathcal{L}_{n}}J(w_{1},w_{2},\cdots, w_{n}),$$
 where
 $$J(w_{1},w_{2},\cdots, w_{n}):=\{x\in X:x\in [w_{1}w_{2}\cdots w_{n}], d(\sigma^{n}x,x)<\psi(n)\}.$$
 Obviously, we can estimate the diameter of $J(w_{1},w_{2},\cdots, w_{n})$ by
 $${\rm diam}(J(w_{1},w_{2},\cdots, w_{n}))\leq e^{-n}\psi(n) .$$
 As a result, for any $s>\frac{h}{1+b}$, and without lost generality we can assume that $ s=\frac{h(1+\delta)}{1+b}, $ for some $\delta>0.$
 Also, by the definition of the topological entropy of $X$ and the definition of $b$, we can choose $\epsilon$ such that $(\frac{h(1+\delta)}{1+b}+1)\epsilon<\frac{h\delta}{2}$, and then
 we have $$\sharp\mathcal{L}_{n}(X)\leq e^{n(h+\epsilon)}~\text{and}~\psi(n)\leq e^{-n(b-\epsilon)}$$ for $n$ large enough.
Hence,
\begin{align*}
\begin{split}
H^{s}(R(\psi))&\leq \liminf\limits_{N\to\infty}\sum_{n=N}^{\infty}\sum_{ (w_{1}w_{2}\cdots w_{n})\in\mathcal{L}_{n}}{\rm diam}(J(w_{1},w_{2},\cdots, w_{n}))^{s}\\
&\leq \liminf\limits_{N\to\infty}\sum_{n=N}^{\infty}e^{n(h+\epsilon)}(e^{-n}\psi(n))^{s}\\
&\leq \liminf\limits_{N\to\infty}\sum_{n=N}^{\infty}e^{-\frac{nh\delta}{2}}.
\end{split}
\end{align*}
 Furthermore, $$H^{s}(R(\psi))< \infty.$$
 This implies
 $${\rm dim}_{H}(R(\psi))\leq \dfrac{h}{1+b}.$$

  \subsection{Lower bound}

\paragraph{Construction of the Moran set}

Fix $\eta>0$, by Proposition \ref{prop2.3}, we can choose $M$ large enough such that
  $$\log \sharp \mathcal{F}_{M}\geq (1-\eta)Mh.$$
 Choose a largely sparse subsequence $\{n_{k}\}_{k\geq 1}$ of $\mathbb N$, such that
\begin{align}\label{eq2}
 \liminf\limits_{n\to\infty}\frac{-\log \psi(n)}{n}=\lim\limits_{k\to\infty}\frac{-\log \psi(n_{k})}{n_{k}},~~~~~~~~~~~~~\frac{n_{k}}{k}\geq \max\big\{\sum_{j=1}^{k-1}n_{j}, ~-\log \psi(n_{k-1}) \big\}.
\end{align}
For $k= 1$,
define $l_{1}, i_{1}$ such that $n_{1}= l_{1} M+i_{1}$, $0\leq i_{1}<M.$
We define $\hat{n}_{1}= l_{1} M$, and
  the integer $\hat{t}_{1}$,
 \begin{align*}
e^{-\hat{t}_{1}}< \psi(\hat{n}_{1})\leq e^{- \hat{t}_{1}+1},
\end{align*}
and then we choose $t_{1}$ by modifying $\hat{t}_{1}$ such that
$\hat{t}_{1}+M\geq t_{1}\geq \hat{t}_{1}$ and $ M\mid t_{1} $.
As a consequences, we obtain
\begin{align*}
e^{- t_{1}}< \psi(\hat{n}_{1})\leq e^{-  t_{1}+M+1}.
\end{align*}
With $\psi$ is  nonincreasing, we have
$$e^{-  t_{1}+M+1}\geq \psi(\hat{n}_{1})\geq \psi( n_{1}).$$
 And then define the rational number $r_{1}$ such that
 $$~~ \hat{n}_{1}r_{1}=\hat{n}_{1}+t_{1}.$$
For $k\geq 2$,
define $l_{k}, i_{k}$ such that
 $ n_{k}-(\hat{n}_{k-1}+t_{k-1})=  l_{k}   M +i_{k} $, $0\leq i_{k}<M.$
 And then we define $\hat{n}_{k}:=\hat{n}_{k-1}+t_{k-1}+ l_{k}   M.$
 Define the integer $\hat{t}_{k}$,
 \begin{align*}
e^{-\hat{t}_{k}}< \psi(\hat{n}_{k})\leq e^{- \hat{t}_{k}+1},
\end{align*}
and then we choose $t_{k}$  satisfy $M\mid t_{k} $ and
$\hat{t}_{k}+ M\geq t_{k}\geq \hat{t}_{k}$.
As a consequences, we have
\begin{align}\label{eq1}
e^{- t_{k}}< \psi(\hat{n}_{k})\leq e^{-  t_{k}+ M+1}.
\end{align}
With $\psi(n)$ is nonincreasing, we have
\begin{align}\label{eqjiu}
e^{-t_{k}+ M+1}\geq \psi(\hat{n}_{k})\geq \psi(n_{k}).
\end{align}
And then define the rational number $r_{k}$ such that
 $$~~ \hat{n}_{k}r_{k}=\hat{n}_{k}+t_{k}.$$
 From the definitions, we can see that
 \begin{align}\label{eqn}
 n_{k}- M\leq \hat{n}_{k}\leq n_{k}.
 \end{align}
We are now in the place to construct a Moran subset of $R(\psi)$ as follows.
As we realize the events $d(\sigma^{n}x,x)<\psi(n)$ for infinitely many times along the subsequence $\{\hat{n}_{k}\}_{k\geq 1}.$

 \subparagraph{ Level 1 of the Moran set.}
Recall the definition of $t_{1}$ and $r_{1}$
$$\mathbb F(1)=\bigcup [(\underbrace{w_{1} \cdots     w_{ M }}_{ M }     \cdots  \underbrace{w_{(l_{1}-1) M +1} \cdots  w_{l_{1} M }}_{ M }    )^{r_{1}}],$$
where the union is taken over all blocks $(w_{l M +1},\cdots, w_{(l+1) M })\in \mathcal{F}_{M}$ for each $0\leq l\leq l_{1}-1  $. Since $\mathcal{F}$ has free concatenation property, the concatenation is admissible.
From the construction, we have that for any word $I\in \mathbb F(1)$ and $x\in I$, the prefix of $\sigma^{\hat{n}_{1}}x$ and $x$ coincide at the first $t_{1}$ digits.

\subparagraph{ Level 2 of the Moran set.} The second level sets is composed of collection of words of each word $J_{1}\in \mathbb F(1)$:
$$\mathbb F(2)=\bigcup_{J_{1}\in \mathbb F(1)} \mathbb F(2,J_{1}),$$
where for a fixed $J_{1}\in \mathbb F(1),$ writing $J_{1}=[(w_{1} \cdots  w_{\hat{n}_{1}+t_{1}})] $ and

 \begin{align*}
 \begin{split}
 \mathbb F(2,J_{1})=\bigcup & [(w_{1} \cdots  w_{\hat{n}_{1}+t_{1}} \underbrace{w_{\hat{n}_{1}+t_{1}+1} \cdots  w_{\hat{n}_{1}+t_{1}+ M }}_{ M } \cdots
   \underbrace{w_{\hat{n}_{1}+t_{1}+(l_{2}-1) M +1} \cdots  w_{\hat{n}_{1}+t_{1}+l_{2} M }}_{ M }
    )^{r_{2}}],
 \end{split}
 \end{align*}
where the union is taken over all blocks $(w_{\hat{n}_{1}+t_{1}+l M +1},\cdots, w_{\hat{n}_{1}+t_{1}+(l+1) M })\in \mathcal{F}_{M}$ for each $0\leq l\leq l_{2}-1  $. Since $\mathcal{F}$ has free concatenation property, the concatenation is admissible.
From the construction, we have that for any word $I\in \mathbb F(2)$ and $x\in I$, the prefix of $\sigma^{\hat{n}_{2}}x$ and $x$ coincide at the first $t_{2}$ digits.

  \subparagraph{ From level k to level k+1}
 Provided that $\mathbb{F}(k)$ has been defined, we define  $\mathbb F(k+1)$ as follows:
 $$\mathbb{F}(k+1)=\bigcup_{J_{k}\in \mathbb{F}(k)}\mathbb{F}(k+1,J_{k}),$$
 where for any  $J_{k}=[(w_{1} \cdots  w_{\hat{n}_{k}+t_{k}})]\in \mathbb{F}(k),$

  \begin{align*}
 \begin{split}
 &\mathbb F(k+1 , J_{k})\\&=\bigcup  [(w_{1} \cdots  w_{\hat{n}_{k}+t_{k}} \underbrace{w_{\hat{n}_{k}+t_{k}+1} \cdots  w_{\hat{n}_{k}+t_{k}+ M }}_{ M } \cdots
    \underbrace{w_{\hat{n}_{k}+t_{k}+(l_{k+1}-1) M +1} \cdots    w_{\hat{n}_{k}+t_{k}+l_{k+1} M }}_{ M } )^{r_{k+1}}],
 \end{split}
 \end{align*}
where the union is taken over all blocks $(w_{\hat{n}_{k}+t_{k}+l M +1},\cdots, w_{\hat{n}_{k}+t_{k}+(l+1) M })\in \mathcal{F}_{M}$ for each $0\leq l\leq l_{k+1}-1 $. Since $\mathcal{F}$ has free concatenation property, the concatenation is admissible.
From the construction, we have that for any word $I\in \mathbb F(k+1)$ and $x\in I$, the prefix of $\sigma^{\hat{n}_{k+1}}x$ and $x$ coincide at the first $t_{k+1}$ digits.
\subparagraph{ The Moran set} We obtain a nested sequence $\{\mathbb{F}(k)\}_{k\geq 1}$ composed of word. And then the Moran set is obtained as
 $$\mathbb F_{\infty}=\bigcap_{k=1}^{\infty}\mathbb F(k).$$
 By the above constructions, we get
 $$\mathbb F_{\infty}\subset R(\psi).$$

 \paragraph{Supporting measure}
 Now we construct a probability measure $\mu$ on $\mathbb F_{\infty}$.
 For any $J_{k}\in \mathbb F(k)$, letting $J_{k-1}\in \mathbb F(k-1)$ be its mother word, i.e.,$J_{k}\in \mathbb F(k,J_{k-1})$, the measure of $J_{k}$ is defined as
 \begin{align*}\begin{split}
 \mu(J_{k})&:=\dfrac{1}{ \sharp  \mathbb F(k,J_{k-1}) }\mu(J_{k-1})\\
 &=   \prod_{j=1}^{k}\dfrac{1}{(\sharp\mathcal{F}_{M})^{l_{j} }  }.
 \end{split}
 \end{align*}
 This means that the measure of any mother word is evenly distributed among her offsprings.
 For any $n\geq 1$, and $n$ long word $I_{n}=[w_{1}\cdots w_{n}] $ with $I_{n}\cap\mathbb F_{\infty}\neq \emptyset,$ let $k\geq 2$ be the integer such that $\hat{n}_{k-1}+t_{k-1}<n\leq \hat{n}_{k}+t_{k}$. We just set
 $$\mu([w_{1}\cdots w_{n}])=\sum_{J_{k}\subset I_{n}}\mu(J_{k}),$$
 where the summation is taken over all words $J_{k}\in \mathbb F(k)$ contained in $I_{n}$. In fact, we have the following expression for the measure of a word.

(1) If $\hat{n}_{k-1}\leq n\leq \hat{n}_{k-1}+t_{k-1}$,
 $$\mu(I_{n})=\mu(I_{\hat{n}_{k-1}+t_{k-1}}).$$

 (2) If $\hat{n}_{k-1}+t_{k-1}<n<\hat{n}_{k},$ assume $n=\hat{n}_{k-1}+t_{k-1}+lM+i.$

 For $i=0$ and $0\leq l\leq l_{k}-1$,
   $$\mu(I_{n})=\mu(I_{\hat{n}_{k-1}+t_{k-1}})\frac{1}{(\sharp\mathcal{F}_{M}\big)^{l}}.$$

For $i\neq 0$ and $0\leq l\leq l_{k}-1$,
$$\mu(I_{n})\leq \mu(I_{\hat{n}_{k-1}+t_{k-1}+lM}).$$
\paragraph{H\"{o}lder exponent of the measure}

Connect (\ref{eqjiu}) with the fact that $b=\lim\limits_{ k \to\infty}\frac{-\log \psi(n_{k})}{n_{k}}$,
we obtain $$\lim\limits_{ k \to \infty}\frac{t_{k}}{n_{k}}\leq b.$$
Accordingly, from (\ref{eqn}), we have
 $$\lim\limits_{ k \to \infty}\frac{t_{k}}{\hat{n}_{k}}\leq b.$$
Furthermore, by virtue of (\ref{eq2}), (\ref{eqjiu}) and (\ref{eqn}), there exists $k_{0}$ such that
$ k \geq k_{0}$ satisfy
$$\dfrac{\hat{n}_{k}-\hat{n}_{k-1}-t_{k-1}}{\hat{n}_{k}+t_{k}}\geq \frac{1- \eta}{1+b}.$$
Firstly, we consider $J_{k}$,
\begin{align*}
\begin{split}
\dfrac{-\log \mu(J_{k})}{\hat{n}_{k}+t_{k}}&=\dfrac{\sum_{j=1}^{k} l_{j} \log \sharp\mathcal{F}_{M}  }{\hat{n}_{k}+t_{k}}
\\
&\geq \dfrac{\hat{n}_{k}-\hat{n}_{k-1}-t_{k-1}}{\hat{n}_{k}+t_{k}}\times\dfrac{\log \sharp\mathcal{F}_{M} }{ M }\\
&\geq \dfrac{h(1- \eta)^{2}}{1+b}.
\end{split}
\end{align*}
Hence,
\begin{align*}
\begin{split}
  \mu(J_{k})& \leq e^{-(\hat{n}_{k}+t_{k})\dfrac{h(1- \eta)^{2}}{1+b}}.\\
 \end{split}
\end{align*}
(1) If $\hat{n}_{k-1}\leq n\leq \hat{n}_{k-1}+t_{k-1}$,
 $$\mu(I_{n})=\mu(I_{\hat{n}_{k-1}+t_{k-1}}). $$
And then
$$\dfrac{-\log \mu(I_{n})}{n}\geq \dfrac{-\log \mu(I_{\hat{n}_{k-1}+t_{k-1}})}{\hat{n}_{k-1}+t_{k-1}}\geq \dfrac{h(1- \eta)^{2}}{1+b}.$$
This implies that
$$\mu(I_{n})\leq {\rm diam}(I_{n})^{\dfrac{h(1- \eta)^{2}}{1+b}}.$$
 (2) If $\hat{n}_{k-1}+t_{k-1}< n<\hat{n}_{k},$ denote $n=\hat{n}_{k-1}+t_{k-1}+lM+i.$

For $i=0$ and $0\leq l\leq l_{k}-1$,
   \begin{align*}
   \begin{split}
   \mu(I_{n})&=\mu(I_{\hat{n}_{k-1}+t_{k-1}})\frac{1}{ \big(\sharp\mathcal{F}_{M}\big)^{l}}\\
   &\leq  \mu(I_{\hat{n}_{k-1}+t_{k-1}}) e^{-l(1-\eta)Mh}.
   \end{split}
   \end{align*}
And then
\begin{align*}
\begin{split}
\dfrac{-\log \mu(I_{n})}{n}&\geq \dfrac{-\log \mu(I_{\hat{n}_{k-1}+t_{k-1}})+l(1- \eta) M h}{\hat{n}_{k-1}+t_{k-1}+l M  }\\&
= \min\Big\{ \dfrac{-\log \mu(I_{\hat{n}_{k-1}+t_{k-1}}) }{\hat{n}_{k-1}+t_{k-1}},  (1- \eta)h \Big\}\\&
\geq \dfrac{h(1- \eta)^{2}}{1+b}.
\end{split}
\end{align*}
This implies that
$$\mu(I_{n})\leq {\rm diam}(I_{n})^{\dfrac{h(1- \eta)^{2}}{1+b}}.$$

For $i\neq 0$ and $0\leq l\leq l_{k}-1$,
$$\mu(I_{n})\leq \mu(I_{\hat{n}_{k-1}+t_{k-1}+lM}).$$
From $${\rm diam}(I_{n})\geq e^{-  M } {\rm diam}(I_{\hat{n}_{k-1}+t_{k-1}+lM}),$$
we have
$$\mu(I_{n})\leq e^{  M \dfrac{h(1- \eta)^{2}}{1+b}}{\rm diam}(I_{n})^{\dfrac{h(1- \eta)^{2}}{1+b}}.$$
Finally, by Theorem \ref{thg} and $\eta$ can be arbitrary small, we finish the proof of {\bf C2)}.


 \section{Proof of Theorem\ref{main2}}
 Naturally, the proof is divided into two parts.
 \subsection{Upper bound}
 The proof is similar to the proof of the upper bound of Theorem\ref{main1}.
 Clearly,

  $$R(f)=\bigcap_{N=1}^{\infty}\bigcup_{n=N}^{\infty}\bigcup_{(w_{1} w_{2} \cdots w_{n})\in\mathcal{L}_{n}}J(w_{1},w_{2},\cdots, w_{n}),$$
 where
 $$J(w_{1},w_{2},\cdots, w_{n}):=\{x\in X:x\in [w_{1}w_{2}\cdots w_{n}], d(\sigma^{n}x,x)<e^{-S_{n}f(x)}\}.$$
For each $(w_{1}w_{2}\cdots w_{n})$, we can choose $y$  such that
$$S_{n}f(y)=\sup_{x\in [w_{1}w_{2}\cdots w_{n}]}S_{n}f(x). $$ Thus, by the continuity of $f$, for each $\delta>0$ and $n$ large enough, we obtain
 $$J(w_{1},w_{2},\cdots, w_{n})\subset \{x\in X:x\in [w_{1}w_{2}\cdots w_{n}], d(\sigma^{n}x,x)<e^{-S_{n}f(y)}e^{n\delta}\},$$
 where  $ S_{n}f(y)=\inf_{x\in [x_{1}\cdots x_{n}]}S_{n}f(x) .$
Thus,
$${\rm diam}(J(w_{1},w_{2},\cdots, w_{n}))\leq e^{-S_{n}f(y)+n\delta-n}. $$
We define $s(\delta)$ be the solution of $P (   (s    (-1-f+\delta) )=0$, and by the continuity of the pressure function $f\mapsto P(f)$  and the boundedness of $s(\delta)$, we obtain  $\lim\limits_{\delta\to 0^{+}}s(\delta)=s(X).$
 At the same time,  we denote $ P:=P (  (s  (\delta)+\delta) (-1-f+\delta) )<0,$
 then there exists $\epsilon(\delta)>0$, such that
 $$\sum_{w\in \mathcal{L}_{n}(X )}\big(e^{-S_{n}(f(x)+1-\delta)}\big)^{ s  (\delta)+\delta }\leq e^{ -n\epsilon(\delta)}, $$
 for $n$ large enough.
Moreover,
\begin{align*}
\begin{split}
H^{ s  (\delta)+\delta}(R(f))&\leq \liminf\limits_{N\to\infty}\sum_{n=N}^{\infty}\sum_{ (w_{1}w_{2}\cdots w_{n})\in\mathcal{L}_{n}}{\rm diam}(J(w_{1},w_{2},\cdots, w_{n}))^{ s  (\delta)+\delta}\\
&\leq \liminf\limits_{N\to\infty}\sum_{n=N}^{\infty} e^{  -n\epsilon(\delta)}<\infty.\\
 \end{split}
\end{align*}
This implies that
$${\rm dim}R(f)\leq s(\delta)+\delta.$$
With $\delta$ is arbitrary small, we finished the proof.

\subsection{Lower bound}
By the continuity of $f$, choose
$ y\in [x_{1}\cdots x_{n}] $ such that $ S_{n}f(y)=\inf_{x\in [x_{1}\cdots x_{n}]}S_{n}f(x) ,$ i.e., $y$ depends on $n,x.$
It suffices to show that the result holds for the set
$$\Big\{x\in X:~d(x,\sigma^{n}x)< e^{-S_{n}f(y)} ~\text{with}~\text{for infinitely many }~n\in \mathbb N\Big\}.$$
Fix $\eta>0$, by Proposition\ref{prop2.4}, we can choose $M\in \mathcal{N}(\mathcal{F})$ and moreover, for any $n, m\geq M$ with $n, m\in \mathcal{N}(\mathcal{F})$,    satisfying
\begin{align}\label{equ}
\begin{split}
&\sup\{|f(x)- f(y)|:x, y\in X, d(x,y)\leq e^{-M}\}\leq \frac{\eta}{4},\\&~~|s_{n}(\mathcal{F})-s(X)|<\eta ~\text{and}~~|s_{n}(\mathcal{F})-s_{m}(\mathcal{F})|<\eta.
\end{split}
\end{align}
Since $\mathcal{F}$ has free concatenation property, we can see that
$M, 2M,3M,\cdots  \in \mathcal{N}(\mathcal{F}).$
\subsubsection{Construction of the Moran set}
In the following, for any $ [w_{1} \cdots w_{n }]$, we set $y\in [w_{1} \cdots w_{n }]$ satisfies
$$S_{n}f(y):=\inf_{x\in [w_{1} \cdots w_{n }]}S_{n}f(x).$$
For $k=1  $,
choose $m_{1}=M$  and define $n_{1}:=m_{1}$.
For any $(w_{1}\cdots w_{n_{1}})\in \mathcal{F}$, define $\hat{t}=\hat{t}(w_{1}\cdots w_{n_{1}})$ to be the integer,
\begin{align*}
e^{-\hat{t}_{1}}<e^{-S_{n_{1}}f(y)}\leq e^{- \hat{t}_{1}+1},
\end{align*}
where $S_{n_{1}}f(y)=\inf_{x\in [w_{1}\cdots w_{n_{1}}]}S_{n_{1}}f(x).$
Moreover, we choose $t_{1}$ by modifying $\hat{t}_{1}$ such that
$\hat{t}_{1}+M\geq t_{1}\geq \hat{t}_{1}$ and $M\mid t_{1}. $ So
\begin{align*}
e^{- t_{1}}< e^{-S_{n_{1}}f(y)}\leq e^{-  t_{1}+M+1}.
\end{align*}
And then define $r_{1}$ such that
 $$~~ n_{1}r_{1}=n_{1}+t_{1}.$$
For $k\geq 2$,
define $m_{k}$ satisfy $ M \mid m_{k} $,  we can choose $m_{k}$
large enough such that
\begin{align}\label{eq3}
 (n_{k-1}+t_{k-1}) ||f||\leq \frac{m_{k}\eta}{2}, ~~\text{and}~\frac{m_{k}}{k}\geq m_{1}+\cdots+m_{k-2}+  m_{k-1},
\end{align}
and then define $ n_{k} =m_{k}+n_{k-1}+t_{k-1}$.
Define the integer $\hat{t}_{k}$,
 \begin{align*}
e^{-\hat{t}_{k}}< e^{-S_{n_{k}}f(y)}\leq e^{- \hat{t}_{k}+1},
\end{align*}
and then we choose $t_{k}$  satisfy $M\mid t_{k} $  and satisfy
$\hat{t}_{k}+M\geq t_{k}\geq \hat{t}_{k}$.
As a consequences, we have
\begin{align}\label{eq4a}
e^{- t_{k}}< e^{-S_{n_{k}}f(y)}\leq e^{-  t_{k}+M+1}.
\end{align}
And then define $r_{k}$ such that
 $$~~ n_{k}r_{k}=n_{k}+t_{k}.$$
We are now in the place to construct a Moran subset of $R(f)$ as follows.

{\bf Level 1 of the Moran set.}
Recall the definition of $t_{1}$ and $r_{1}$
$$\mathbb F(1)=\bigcup [( w_{1}^{1}  \cdots     w_{m_{1}}^{1}        )^{r_{1}}],$$
where the union is taken over all blocks $ ( w_{1}^{1}  \cdots     w_{m_{1}}^{1}        )\in  \mathcal{F}_{m_{1}} $,
 with $r_{1}m_{1}=m_{1}+t_{1}.$
Since $\mathcal{F}$ has free concatenation property, the concatenation is admissible.
From the construction, we have that for any word $I\in \mathbb F(1)$ and $x\in I$, the prefix of $\sigma^{n_{1}}x$ and $x$ coincide at the first $t_{1}$ blocks.

 {\bf Level 2 of the Moran set.} The second level is composed of collection of words of each word $J_{1}\in \mathbb F(1)$:
$$\mathbb F(2)=\bigcup_{J_{1}\in \mathbb F(1)} \mathbb F(2,J_{1}),$$
where for a fixed $J_{1}\in \mathbb F(1),$ writing $J_{1}=[(w_{1}  \cdots  w_{m_{1}+t_{1}} )].$

 \begin{align*}
 \begin{split}
 \mathbb F(2,J_{1})=\bigcup & [(w_{1}   \cdots  w_{m_{1}+t_{1}}   w_{1}^{2}  \cdots w_{m_{2}}^{2})^{r_{2}}]
 \end{split}
 \end{align*}
where the union is taken over all blocks $ (w_{1}^{2}  \cdots w_{m_{2}}^{2})\in \mathcal{F}_{m_{2}} $,
 with $r_{2}n_{2}=n_{1}+t_{1}+m_{2}+t_{2}$.
Since $\mathcal{F}$ has free concatenation property, the concatenation is admissible.

  {\bf From level k to level k+1}
 Provided that $\mathbb{F}(k)$ has been defined, we define  $\mathbb F(k+1)$ as follows.
 $$\mathbb{F}(k+1)=\bigcup_{J_{k}\in \mathbb{F}(k)}\mathbb{F}(k+1,J_{k}),$$
 where for any  $J_{k}=[(w_{1}  \cdots  w_{t_{k}+n_{k}} )]\in \mathbb{F}(k),$

  \begin{align}
 \begin{split}
 \mathbb F(k+1 , J_{k})=\bigcup & [(w_{1}  \cdots  w_{t_{k}+n_{k}}w_{1}^{k+1} \cdots w_{m_{k+1}}^{k+1})^{r_{k+1}}],
 \end{split}
 \end{align}
where the union is taken over all blocks $ (w_{1}^{k+1},\cdots w_{m_{k+1}}^{k+1})\in \mathcal{F}_{m_{k+1}}  $,
 with $r_{2}n_{k+1}=n_{k}+m_{k+1}+t_{k+1}.$
Since $\mathcal{F}$ has free concatenation property, so the concatenation is admissible.

 {\bf The Moran set} We obtain a nested sequence $\{\mathbb{F}(k)\}_{k\geq 1}$ composed of word. And then the Moran set is obtained as
 $$\mathbb F_{\infty}=\bigcap_{k=1}^{\infty}\mathbb F(k).$$
From the above constructions, we can get
 $$\mathbb F_{\infty}\subset R(f).$$

 \subsubsection{Supporting measure}
 Now we construct a probability measure $\mu$ on $\mathbb F_{\infty}$.
 For any $J_{k}\in \mathbb F (k)$, letting $J_{k-1}\in \mathbb F(k-1)$ be its mother word, i.e.,$J_{k}\in \mathbb F(k,J_{k-1})$, the measure of $J_{k}$ is defined as
 \begin{align*}\begin{split}
 \mu(J_{k})&:=e^{-s_{k} m_{k}    - s_{k}S_{m_{k}}f(y^{k}) }\mu(J_{k-1})\\
 &=  \prod_{j=1}^{k}e^{-s_{j} m_{j}    -  s_{j}S_{m_{j}}f(y^{j}) }
 \end{split}
 \end{align*}
 where $s_{k}:=s_{m_{k}}(\mathcal{F})$ and $S_{m_{j}}f(y^{j})=\inf_{x\in [w^{j}_{1}\cdots w^{j}_{m_{j}}]}S_{m_{j}}f(x)$ with $y^{j}\in [w^{j}_{1}\cdots w^{j}_{m_{j}}]$  .
This means that the measure of any mother word is evenly distributed among her offsprings.
 For any $n\geq 1$, and $n$ long word $I_{n}=[w_{1}\cdots w_{n}] $ with $I_{n}\cap\mathbb F_{\infty}\neq \emptyset,$
 note that $n_{k}$ and $t_{k}$ depend on the digits. That is, given a block $[w_{1}\cdots w_{n}]$ of length $n$, it determines $t_{1}$ if $n\geq n_{1}$. If $t_{k-1}$ can be determined, we then compare $n$ with $n_{k}=n_{k-1}+t_{k-1}+m_{k}$. If $n\geq n_{k}$, it determines $t_{k}$; otherwise, we have $n_{k-1}\leq n<n_{k}+t_{k-1}+m_{k}=n_{k}$, and the block  $[w_{1}\cdots w_{n}]$ determines $n_{1}$ up to $n_{k-1}$.

Now we consider any word with length  $n_{k-1}\leq n<n_{k} $. We just set
 $$\mu([w_{1}\cdots w_{n}])=\sum_{J_{k}\subset I_{n}}\mu(J_{k}).$$
 where the summation is taken over all words $J_{k}\subset \mathbb F(k)$ contained in $I_{n}$.
In fact, we have the following expression for the measure of a word.

 (1) When $n_{k-1}\leq n\leq n_{k-1}+t_{k-1}$,
 $$\mu(I_{n})=\mu(I_{n_{k-1}+t_{k-1}})=  \prod_{j=1}^{k-1}e^{-s_{j} m_{j}    -  s_{j}S_{m_{j}}f(y^{j}) }. $$

 (2) When $n_{k-1}+t_{k-1}<n<n_{k},$

\begin{align*}
\begin{split}
   \mu(I_{n})&= \sum_{J_{k}\subset I_{n}}\mu(J_{k}) \\
   &=\sum_{(w_{n+1}\cdots w_{n_{k}})\in \Xi}\mu(I_{n_{k}+t_{k}}(w_{1}\cdots w_{n}w_{n+1}\cdots w_{n_{k}})),
\end{split}
\end{align*}
where $\Xi$ denote the set of $(w_{n+1}\cdots w_{n_{k}})$ such that $ (w_{1}\cdots w_{n}w_{n+1}\cdots w_{n_{k}} )\in \mathcal{F}_{m_{k}}.$

\subsubsection{H\"{o}lder exponent of the measure}

Firstly, we consider the $J_{k}$, by (\ref{equ}), for $k$ large enough,

 \begin{align*}\begin{split}
 \mu(J_{k})\leq  \prod_{i=1}^{k}\big( e^{-  m_{j}    -  S_{m_{j}}f(y^{j}) } \big)^{s(X)-\eta}.
 \end{split}
 \end{align*}
From inequality (\ref{equ}) and (\ref{eq3}), for any $J_{k}=[w_{1}\cdots w_{n_{k}}],$
\begin{align}\label{eq4}
\begin{split}
&\sum_{j=1}^{k}|S_{m_{j}}f(y^{j})-S_{n_{j}}f(y)|\\&\leq \sum_{j=1}^{k}\Big(|S_{m_{j}}f(y^{j})-S_{m_{j}}f(\sigma^{n_{j-1}+t_{j-1}}y_{0}^{j})|+|S_{m_{j}}f(\sigma^{n_{j-1}+t_{j-1}}y_{0}^{j})-S_{n_{j}}f(y)|\Big)\\&\leq  \sum_{j=1}^{k} (\frac{m_{j}\eta}{4}+\frac{m_{j}\eta}{4}+\frac{m_{j}\eta}{2}) \leq 2m_{k}\eta,
\end{split}
\end{align}
where $y^{j}_{0}\in [w_{1}\cdots w_{n_{j}}]$ for any $1\leq j\leq k$ and any $y\in J_{k}$.
Furthermore,
 \begin{align*}\begin{split}
 \mu(J_{k})&\leq  \prod_{j=1}^{k}\big( e^{-  m_{j}    -  S_{m_{j}}f(y^{j}) } \big)^{s(X)-\eta} \\
 &\leq  \prod_{j=1}^{k}\big( e^{-  m_{j}    -  S_{n_{j}}f(y ) } \big)^{s(X)-\eta}e^{2m_{k}\eta(s(X)-\eta)} \\
 &\leq  \prod_{j=1}^{k}\big(e^{-m_{i}  - t_{j}+M+1}\big)^{s(X)-\eta}e^{2m_{k}\eta(s(X)-\eta)} \\
 &\leq e^{-(n_{k}+t_{k})(s(X)-\eta)+2m_{k}\eta(s(X)-\eta)+2M(s(X)-\eta)}\\
 &\leq  e^{-(n_{k}+t_{k})(s(X)-\eta -2 \eta(s(X)-\eta)) +2M(s(X)-\eta) }\\
 &=C(\eta)e^{-(n_{k}+t_{k})(s(X)-\Delta(\eta))},
 \end{split}
 \end{align*}
 where $C(\eta):=e^{2M(s(X)-\eta)}$ is bounded, and $\Delta(\eta):=\eta +2 \eta(s(X)-\eta)$ satisfy $\Delta(\eta)\to 0(\eta\to 0).$
The second inequality follows from (\ref{eq4}) and the third inequality follows from (\ref{eq4a}).
And then, we can give the following estimate.

 (1)If $n_{k-1}\leq n\leq n_{k-1}+t_{k-1}$, then
\begin{align*}
\begin{split}
\mu(I_{n})&=\mu(I_{n_{k-1}+t_{k-1}})\leq C(\eta){\rm diam}(J_{k-1})^{(s(X)-\Delta(\eta) )}\\
&=C(\eta)e^{-n(s(X)-\Delta(\eta))}e^{-(n_{k-1}+t_{k-1}-n)(s(X)-\Delta(\eta))}\\
&\leq C(\eta){\rm diam}(I_{n})^{ (s(X)-\Delta(\eta))}.
\end{split}
\end{align*}

 (2)If $n_{k-1}+t_{k-1}<n< n_{k},$ devote $n=n_{k-1}+t_{k-1}+l, $ then

\begin{align*}
\begin{split}
   \mu(I_{n})
   &=\mu(J_{k-1})\sum_{w_{n+1}\cdots w_{n_{k}}\in \Xi}e^{-s_{k} m_{k}    -  s_{k}S_{m_{ k}}f(y^{k}) } ,
\end{split}
\end{align*}
where $\Xi$ denote the set of $(w_{n+1}\cdots w_{n_{k}})$ such that $ (w_{1}\cdots w_{n}w_{n+1}\cdots w_{n_{k}} )\in \mathcal{F}_{m_{k}}.$

Firstly, we estimate the last summation,
$$\sum_{w_{n+1}\cdots w_{n_{k}}\in \Xi}e^{-s_{k} m_{k}    -  s_{k}S_{m_{ k}}f(y^{k}) }\leq \sum_{w_{n+1}\cdots w_{n_{k}}\in \Xi}e^{-s_{k} m_{k}    -  s_{k}S_{m_{ k}-l}f(\sigma^{l} y^{k}) }.$$
By  virtue of the definition of $s_{k}$, we have

$$\sum_{w_{1}\cdots w_{l}w_{l+1}\cdots w_{n_{k}}\in \mathcal{F}_{m_{k}}}e^{-s_{k} m_{k}    -  s_{k}S_{m_{ k}}f(y^{k}) }=1.$$
 (1) If $l<M$,
 \begin{align*}
 \begin{split}
 1&\geq \sum_{  w_{1}\cdots w_{l}w_{l+1}\cdots w_{n_{k}}\in \Xi}e^{-s_{k} m_{k}    -  s_{k}S_{m_{ k}}f(y^{k}) }\\
 & = \sum_{ w_{1}\cdots w_{l}w_{l+1}\cdots w_{n_{k}}\in \Xi}e^{-s_{k}l    -  s_{k}S_{l}f(y^{k}) }e^{-s_{k} (m_{k}-l)    -  s_{k}S_{m_{ k}-l}f(\sigma^{l} y^{k}) }\\
 &\geq e^{-s_{k}M-s_{k}M||f||}\sum_{   w_{l+1}\cdots w_{n_{k}}\in \Xi} e^{-s_{k} (m_{k}-l)    -  s_{k}S_{m_{ k}-l}f(\sigma^{l}y^{k}) }.
 \end{split}
 \end{align*}
 Hence,
 \begin{align*}
 \begin{split}
 \sum_{   w_{l+1}\cdots w_{n_{k}}\in \Xi} e^{-s_{k} (m_{k}-l)    -  s_{k}S_{m_{ k}-l}f(y^{k}) }&\leq e^{ s_{k}M+s_{k}M||f||}
 .\end{split}
\end{align*}
As a consequence,

\begin{align*}
\begin{split}
\mu(I_{n})&\leq C(\eta)e^{-(n_{k-1}+t_{k-1})(s(X)-\Delta(\eta))+s_{k}M+s_{k}M||f||}\\&\leq C(\eta){\rm diam}(I_{n})^{s(X)-\Delta(\eta)}e^{M(s(X)-\Delta(\eta))}e^{s_{k}M+s_{k}M||f||}\\
&\leq C(\eta){\rm diam}(I_{n})^{s(X)-\Delta(\eta)}e^{M(s(X)-\Delta(\eta))+(s(X)+\eta)(M+M||f||)}.
\end{split}
\end{align*}
 (2) If $l\geq M$, firstly, assume that $s_{l}(\mathcal{F})$ is well defined, and then we have $|s_{l}(\mathcal{F})-s_{k}|<\eta.$

\begin{align*}
\begin{split}
1&=\sum_{w_{1}\cdots w_{l}w_{l+1}\cdots w_{n_{k}}\in \mathcal{F}_{n_{k}}}e^{-s_{k} m_{k}    -  s_{k}S_{m_{ k}}f(y^{k}) }\\
&=\sum_{ w_{l+1}\cdots w_{n_{k}}}\sum_{w_{1}\cdots w_{l} }e^{-s_{k}l    -  s_{k}S_{l}f(y^{k}) }e^{-s_{k} (m_{k}-l)    -  s_{k}S_{m_{ k}-l}f(\sigma^{l}y^{k}) }.
\end{split}
\end{align*}
Moreover, since $s_{l}(\mathcal{F})$ is well defined,
\begin{align*}
\begin{split}
\sum_{w_{1}\cdots w_{l}\in \mathcal{F}_{l} }e^{-s_{k}l    -  s_{k}S_{l}f(y^{k}) }&\geq \sum_{w_{1}\cdots w_{l}\in \mathcal{F}_{l} }e^{-(s_{l}(\mathcal{F})+\eta)l    -  (s_{l}(\mathcal{F})+\eta)S_{l}f( y^{k}) }\\
&\geq e^{-\eta l-l\eta||f||}.
\end{split}
\end{align*}
On the other hand, if $s_{l}(\mathcal{F})$ can not be defined, we can choose $M+l\geq l^{'}>l$, such that $s_{l^{'}}(\mathcal{F})$  and $|s_{l^{'}}(\mathcal{F})-s_{k}|<\eta$ is well defined.
Moreover, we have

\begin{align*}
\begin{split}
\sum_{w_{1}\cdots w_{l} }e^{-s_{k}l    -  s_{k}S_{l}f(y^{k}) }&\geq \sum_{w_{1}\cdots w_{l^{'}} }e^{-s_{k}l^{'}    -  s_{k}S_{l^{'}}f(y^{k}) } \\
&\geq e^{-\eta l^{
'}-l^{'}\eta||f||}\\
&\geq e^{-\eta l -l \eta||f||-M\eta-M\eta||f||}.
\end{split}
\end{align*}
As a consequence,
\begin{align*}
\begin{split}
 \sum_{ w_{l+1}\cdots w_{n_{k}}\in \Xi}  e^{-s_{k} (m_{k}-l)-s_{k}S_{m_{ k}-l}f(y^{k})  }\leq C^{0}(\eta)e^{ \eta l+l\eta||f||},
\end{split}
\end{align*}
where $C^{0}(\eta):=e^{M\eta+M\eta||f||}$.
It turns out that
\begin{align*}
\begin{split}
 \sum_{ w_{l+1}\cdots w_{n_{k}}\in \Xi}   e^{-s_{k}  m_{k} -s_{k}S_{m_{ k}-l}f(y^{k}) }\leq C^{0}(\eta)e^{ \eta l+l\eta||f||}e^{-s_{k}l} .
\end{split}
\end{align*}
Finally,

\begin{align*}
\begin{split}
\mu(I_{n})&\leq  C^{0}(\eta)C(\eta) e^{-(n_{k-1}+t_{k-1})(s(X)-\Delta(\eta))+\eta l+l\eta||f||-s_{k}l}\\&=  C^{0}(\eta)C(\eta){\rm diam}(I_{n})^{s(X)-\Delta(\eta)}  e^{ l(s(X)-\Delta(\eta))+\eta l+l\eta||f||-s_{k}l}\\
&\leq  C^{0}(\eta)C(\eta){\rm diam}(I_{n})^{s(X) -\Delta(\eta)}  e^{ l(-\Delta(\eta)+2\eta+ \eta||f|| )  } \\
&\leq \min\{C^{0}(\eta)C(\eta){\rm diam}(I_{n})^{s(X)  -2\eta-\eta||f||}, C^{0}(\eta)C(\eta){\rm diam}(I_{n})^{s(X) -\Delta(\eta)}\} .
\end{split}
\end{align*}
By Theorem \ref{thg} and $\eta$ can be arbitrary small, we finish the proof of Theorem \ref{main2}.

\begin{rem}
 In the end we pose a question about Theorem {\rm \ref{main1}}.  Does this result remain valid for $\psi(n)$ has no monotonicity and $\liminf\limits_{n\to\infty}\psi(n)=0$ ?
\end{rem}

 \section{Applications}
 {\bf S-gap shifts}
$S$-gap shift $\sum_{S}$ is a subshift of $\{0,1\}^{\mathbb Z}$ defined by the rule that for a fixed $S\subset \{0,1,2\cdots\}$, the number of $0$ between consecutive $s$ is an integer in $S$. That is, the language
  $$\{0^{n}10^{n_1}0^{n_2}0^{n_3}\cdots0^{n_k}1: 1\leq i\leq k ~\text{and}, n,m\in \mathbb N\},$$
  together with $\{0^{n}:n\in \mathbb N\}$, where we assume that $S$ is infinite.

\noindent{\bf $\beta$-shifts}
  Fix $\beta>1$, write $b=\lceil \beta\rceil, $ and let $w^{\beta}\in \{0,1,\cdots,b-1\}^{\mathbb N}$ be the greedy $\beta$-expansion of $1$. Then $w^{\beta}$ satisfies $\sum_{j=1}^{\infty}w_{j}^{\beta} \beta^{-j}=1,$ and
  has the property that $\sigma^{j}(w^{\beta})\prec w^{\beta}$ for all $j\geq 1$, where $\prec$ denotes the lexicographic ordering. The $\beta$-shift is defined by
  $$\Sigma_{\beta}=\{x\in \{0,1,\cdots,b-1\}^{\mathbb N}:\sigma^{j}(x)\prec w^{\beta} ~\text{for all } j\geq 1\}.$$
In \cite{CliTho}, S-gap shifts, $\beta$ shifts and their factors satisfy the non-unform structure i.e., for $X:=\sum_{S} ~\text{or}~\sum_{\beta}$ there exists $\mathcal{G}\subset \mathcal{L}(X)$ has $(W)$-specification and  $\mathcal{L}(X)$ is edit approachable by $\mathcal{G}$.
Set
 $$ R(\psi):=\{x\in X: d(\sigma^{n}x,x)<\psi(n)~\text{for infinitely many}~n\in \mathbb N\}.$$
 Then we have

$${\rm dim}_{H}R(\psi)=\dfrac{h}{1+b},~\text{with}~b=\liminf\limits_{n\to\infty}\dfrac{-\log \psi(n)}{n}.$$
 Let $f$ be a positive continuous function defined on $X$, set
$$R(f)=\{x\in X:~d(\sigma^{n}x,x)\leq e^{-S_{n}f(x)} ~\text{for infinitely many } ~n\in \mathbb N\}.$$
 The Hasudorff dimension of the set $R(f)$ is the unique solution $s$ of the following pressure equation
$$P(-s(f+1) )=0.$$

\end{document}